\documentclass[11pt,a4paper,reqno,svgnames]{amsart}
  \usepackage{mathtools}
   \usepackage{scalefnt}
\usepackage[a4paper,margin=1.15in]{geometry}

\usepackage[normalem]{ulem}
\usepackage{latexsym}
\usepackage{youngtab}           
\usepackage{graphicx}             
\usepackage{color}
\usepackage{latexsym}
\usepackage{amsfonts}
\usepackage{amsxtra}
\usepackage{amsmath}
\usepackage{amssymb}
\usepackage{mathrsfs}
\usepackage{amsthm}
\usepackage{enumitem}
\usepackage{fullpage}
\usepackage{stmaryrd}
\usepackage{color}
\usepackage{booktabs}
\usepackage{lscape}
\usepackage{tikz}
   \usepackage{genyoungtabtikz}
 \usetikzlibrary{positioning,intersections,decorations}
 \usetikzlibrary{arrows,calc,through,backgrounds,matrix,decorations.pathmorphing}
\usepackage{xcolor}
\usepackage[plainpages=false, pdfpagelabels,  colorlinks=true, pdfstartview=FitV, linkcolor=DarkBlue, citecolor=DarkRed, urlcolor=blue]{hyperref}
\usepackage[color]{changebar}
\cbcolor{red}

 \theoremstyle{plain}
\newtheorem{theorem}{Theorem}[section]

\theoremstyle{definition}

\newtheorem{example}[theorem]{Example}

\numberwithin{equation}{section}

 \DeclareMathOperator{\Hom}{Hom}

\DeclareMathOperator{\Res}{Res}
\DeclareMathOperator{\Span}{Span}

\newcommand{\la}{\lambda}
\newcommand{\QQ}{{\mathbb{Q}}}

\newcommand{\FF}{\mathbb F}

\renewcommand{\ge}{\geqslant}

\renewcommand{\ge}{\geqslant}
\renewcommand{\geq}{\geqslant}
\renewcommand{\le}{\leqslant}
\renewcommand{\leq}{\leqslant}
\renewcommand{\unrhd}{\trianglerighteqslant}

\usepackage{cleveref}
 \newcommand{\Std}{{\rm Std}}



\newtheorem{thm}{Theorem}[section]
\newtheorem{cor}[thm]{Corollary}

\newtheorem{lem}[thm]{Lemma}
\newtheorem{prop}[thm]{Proposition}
\newtheorem*{prop*}{Proposition}
\newtheorem*{thm*}{Main Theorem}

\newtheorem*{cor*}{Corollary}
\newtheorem*{conj*}{Conjecture}

\newtheorem*{move1'}{Move $\mathbf{1^*}$}

\theoremstyle{remark}

\newtheorem{rmk}[thm]{Remark}

\newtheorem*{Acknowledgements*}{Acknowledgements}

\theoremstyle{definition}
\newtheorem{defn}[thm]{Definition}
\newtheorem{eg}[thm]{Example}

\crefname{defn}{Definition}{Definitions}
\crefname{thm}{Theorem}{Theorems}
\crefname{prop}{Proposition}{Propositions}
\crefname{lem}{Lemma}{Lemmas}
\crefname{cor}{Corollary}{Corollaries}
\crefname{conj}{Conjecture}{Conjectures}
\crefname{section}{Section}{Sections}
\crefname{subsection}{Subsection}{Subsections}
\crefname{eg}{Example}{Examples}
\crefname{figure}{Figure}{Figures}
\crefname{rem}{Remark}{Remarks}
\crefname{rmk}{Remark}{Remarks}
\crefname{equation}{equation}{equation}

\Crefname{defn}{Definition}{Definitions}
\Crefname{thm}{Theorem}{Theorems}
\Crefname{prop}{Proposition}{Propositions}
\Crefname{lem}{Lemma}{Lemmas}
\Crefname{cor}{Corollary}{Corollaries}
\Crefname{conj}{Conjecture}{Conjectures}
\Crefname{section}{Section}{Sections}
\Crefname{subsection}{Subsection}{Subsections}
\Crefname{eg}{Example}{Examples}
\Crefname{figure}{Figure}{Figures}
\Crefname{rem}{Remark}{Remarks}
\Crefname{rmk}{Remark}{Remarks}

\newcommand{\stt}{\mathsf t}
\newcommand{\sts}{\mathsf s}
\newcommand{\stu}{\mathsf u}
\newcommand{\stv}{\mathsf v}

\newcommand{ \Ind}{{\rm Ind}}
\renewcommand{\Res}{{\rm Res}}
\newcommand{\suchthat}{\;\ifnum\currentgrouptype=16 \middle\fi|\;} 
\title{Skew cell modules for diagram algebras}
 
\author[C. Bowman]{Christopher Bowman}
 \email{Chris.Bowman.2@city.ac.uk}

 \author{John Enyang}
\email{J.Eyang.1@city.ac.uk}
\address{Department of Mathematics, City University London, Northampton Square, London EC1V 0HB, United Kingdom }

\begin{document}
\begin{abstract}
We provide an explicit   construction of  skew cell modules for diagram algebras. 
 
    \end{abstract}
\maketitle

\section*{Introduction}

 The representation theory of the  symmetric group, $\mathfrak{S}_r$,  
is governed by its associated branching graph $\widehat{\mathfrak{S}}$ (also known as    Young's lattice). 
The set of vertices on the $r$th level   this graph, $\widehat{\mathfrak{S}}_r$, is given by the set of partitions of $r$; 
the edges connect partitions which differ by  a single node.  The first few levels of this graph are depicted in \cref{asdhjkdsf}.
 Given $\lambda$ a vertex on the $r$th level of this graph, 
 there is an associated \(\mathbb{Q}\mathfrak{S}_r\)-module, \(S^{\mathbb{Q}}_r(\lambda) \), called a {\sf Specht module}.  
This module   
 has an integral   basis indexed by the set of paths in the branching graph which 
 begin at  $\varnothing$ and terminate at $\lambda$; we refer to such paths as {\sf standard  tableaux} of shape \(\lambda\).  
 The Specht modules $\{S_r^{\mathbb{Q}}(\lambda) \mid \lambda \in \widehat{\mathfrak{S}}_r\}$ provide a complete set of simple \(\mathbb{Q}\mathfrak{S}_r\)-modules.

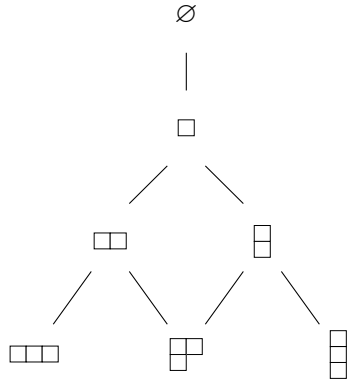
\begin{figure}[ht!]
$$\begin{tikzpicture}[scale=0.5]
          \begin{scope}  
     \draw (0,3) node {   $\varnothing $  };   
              \draw (0,0) node { $  \Yboxdim{6pt}\gyoung(;) $  };   
    \draw (-2,-3) node   {  $ \Yboxdim{6pt}\gyoung(;;)	$}		; 
       \draw (+2,-3) node   {   $ \Yboxdim{6pt}\gyoung(;,;)$	}		;
       \draw (0,-6) node    {     \Yboxdim{6pt}\gyoung(;;,;)}		;
      \draw (-4,-6) node    {    \Yboxdim{6pt}\gyoung(;;;)}		;    
        \draw (4,-6) node    {    \Yboxdim{6pt}\gyoung(;,;,;)}		;
     \draw (0.0,1) -- (0,2);  
                          \path (-1.5,-2) edge[decorate]  node[auto] { \ } (-0.5,-1);   
                                   \path (0.5,-1) edge[decorate]  node[auto] { \ } (1.5,-2);  
                          \path (-0.5,-8.2+3)  edge[decorate]  node[auto] { \ } (-1.5,-6.8+3) ;
                         \path (1.5,-6.8+3) edge[decorate]  node[auto] { \ } (0.5,-8.2+3);
  \path (-3.5,-8.2+3)  edge[decorate]  node[auto] { \ } (-2.5,-6.8+3) ;  
     \path (2.5,-6.8+3) edge[decorate]  node[auto] { \ } (3.5,-8.2+3);   
             \end{scope}\end{tikzpicture}
$$
\caption{The first few  levels of  $\widehat{\mathfrak{S}}$.  }
\label{asdhjkdsf}
\end{figure}

  For \(1\leq s \leq r\), we consider \(\mathfrak{S}_s \) as a subgroup of \(\mathfrak{S}_r\) under the embedding \(\mathfrak{S}_s \subseteq \mathfrak{S}_{r-s} \times \mathfrak{S}_s \subseteq \mathfrak{S}_r\). 
   In other words, $\mathfrak{S}_s$ is the subgroup of permutations of  the set $\{r-s+1,   \ldots ,r-1, r\}\subseteq \{1,   \ldots , r\}$.  
Generalising the above, for any two vertices   \(\lambda \in\widehat{\mathfrak{S}}_{r-s}\) and \(\nu \in\widehat{\mathfrak{S}}_r\), we may define the {\sf skew Specht module}
\begin{align*} 
S^\QQ_s( \nu\setminus\lambda )  := {\rm Hom}_{\mathfrak{S}_{r-s}}
(S^\QQ_{r-s}({\lambda}) ,{\rm Res}_{\mathfrak{S}_{r-s} }^{\mathfrak{S}_{r} }S^\QQ_r(\nu ) ). 
\end{align*}
 The {skew Specht module} $S^\QQ_s(\nu\setminus\lambda)$ 
has an integral  basis indexed by the set of paths in the branching graph which 
 begin at the vertex  $\lambda $ and terminate at the vertex $\nu$; we refer to such paths as 
{\sf skew   tableaux} of shape $\nu\setminus\lambda$.

 Skew Specht modules 
  carry the structure of   simple modules for affine Hecke algebras \cite{Ram} and play an important role in the graded representation theory of the symmetric groups and their affine analogues, see \cite{Muth}.    
  Most importantly for us,    one obtains a 
particularly natural interpretation of the   skew-Kostka     and   Littlewood--Richardson coefficients
 as  the multiplicities 
\begin{align*}
  K_{\lambda,\mu}^\nu  = \dim_{\mathbb{Q}}
{\rm Hom}_{\mathfrak{S}_{s}}
(
  {\rm ind}_{\mathfrak{S}_\mu}^{\mathfrak{S}_s} (\mathbb{Q})
, S^\QQ_s(\nu \setminus\lambda) )  
\qquad 
  c_{\lambda,\mu}^\nu  = \dim_{\mathbb{Q}}
{\rm Hom}_{\mathfrak{S}_{s}}
(S^\QQ_{s}(\mu), S^\QQ_s(\nu \setminus\lambda) ) 
\end{align*}
respectively (here ${\mathfrak{S}_\mu}={\mathfrak{S}_{\mu_1}}\times {\mathfrak{S}_{\mu_2}} \dots$).  We have a natural  map $\varphi: {\rm ind}_{\mathfrak{S}_\mu}^{\mathfrak{S}_s} (\mathbb{Q}) \to S^\QQ_{s}(\mu)$ and so we can immediately deduce that $c_{\lambda,\mu}^\nu \leq  K_{\lambda,\mu}^\nu$.  
In \cite{JAM},  James shows that 
${\rm Hom}_{\mathfrak{S}_{s}}
( {\rm ind}_{\mathfrak{S}_\mu}^{\mathfrak{S}_s} (\mathbb{Q}), S^\QQ_s(\nu \setminus\lambda) )  $   
 has a basis indexed by the set of  {\sf semistandard} skew tableaux of shape $\nu\setminus\lambda$ and weight $\mu$. 
James then shows  that the  homomorphisms which factor 
  through the map $\varphi$
are indexed by the subset of  these  skew   tableaux which satisfy the   {\sf lattice permutation} property.    
This provides a concrete combinatorial interpretation of the coefficients $ K_{\lambda,\mu}^\nu$ and $  c_{\lambda,\mu}^\nu$.  

%

  The   purpose of this article is to generalise the classical construction of 
skew Specht modules for the symmetric group to the setting of diagram algebras
 (such as the Brauer, walled Brauer, Temperley--Lieb, Birman--Murakami--Wenzl,  and partition algebras) over a field $\mathbb{F}$.  
The representation theory of these diagram    algebras is again controlled  by their  associated branching graphs.  Each vertex $\lambda$   of the branching  graph labels a   cell module $\Delta^{\mathbb{F}}(\lambda)$ and the paths in the branching  graph index integral bases of these cell modules.   
Given two fixed points $\lambda$ and $\nu$ in the branching graph, we provide an explicit construction of 
an associated {\sf skew cell module} $\Delta^{\mathbb{F}}(\nu\setminus \lambda)$.  We show that these skew cell modules 
possess  integral  bases  indexed by skew tableaux (paths between the two fixed vertices in the graph) exactly as in the classical case.   
  We  expect the  skew cell modules   
 to play an important role in the graded representation theory of these algebras (which has only recently begun to be explored, see \cite{Li})
 and provide a  link between finite dimensional diagrammatic algebras
  and their affine analogues.

 
%

The partition algebra over the rational numbers, $P^\QQ_s(n)$,   will provide the  motivating example of a diagram algebra throughout this paper.  
 Precisely as in the classical case, one can define the multiplicities 
  $$
 \Gamma_{\lambda,\mu}^\nu = 
\dim_{\mathbb{Q}}
{\rm Hom}_{{P}_{s}(n)}
({\rm ind}_{P^\QQ_\mu(n)}^{P^\QQ_s(n)}(\QQ)  , \Delta^\QQ_s(\nu \setminus\lambda) )
\qquad 
 \overline{g}_{\lambda,\mu}^\nu = \dim_{\mathbb{Q}}
{\rm Hom}_{{P}^\QQ_{s}(n)}
(\Delta^\QQ_{s}(\mu) , \Delta^\QQ_s(\nu \setminus\lambda) )$$
 (here $P^\QQ_{\mu}(n)=P^\QQ_{\mu_1}(n)\times P^\QQ_{\mu_2}(n)\dots$) and 
   $\overline{g}_{\lambda,\mu}^\nu \leq \Gamma_{\lambda,\mu}^\nu $ as before.   
  In this paper we prove that the coefficients $ \overline{g}_{\lambda,\mu}^\nu$ are equal to the {\sf stable Kronecker coefficients}. These coefficients  have been described as   `perhaps the most challenging, deep and mysterious objects in algebraic combinatorics' \cite{PP1}.  
On the other hand, the coefficients $ \Gamma_{\lambda,\mu}^\nu $ do not seem to have been studied anywhere in the literature.
Motivated by the classical case, we ask the following questions:
\begin{enumerate}[label={$\bullet$},leftmargin=*,itemsep=-0.05em]
\item
 can one interpret the coefficients $ \Gamma_{\lambda,\mu}^\nu $ and  $\overline{g}_{\lambda,\mu}^\nu $ in terms of the combinatorics of skew-tableaux for the partition algebra?
\item  do there exist natural generalisations of   the {\sf semistandard} and {\sf lattice permutation} conditions in this setting? 
\item    do   the coefficients  $\Gamma_{\lambda,\mu}^\nu $ provide a first step towards understanding the stable Kronecker coefficients $\overline{g}_{\lambda,\mu}^\nu $?
\end{enumerate}
We shall address these questions in an upcoming series of papers. 
 
%
%

The paper is structured as follows.  
In Section  1 we recall the  axiomatic framework    for  diagram algebras due to 
Enyang, Goodman, and  Graber.  
We focus on the construction of  Murphy bases of these diagram algebras
which are encoded via the  branching graph.
 In Section 2 we consider the restriction of a cell module down the tower of algebras; 
 we construct filtrations of these restricted modules which will be essential in Sections 3 and 4.
In Section 3 we recall the definitions of Jucys--Murphy elements and seminormal forms of  diagram algebras, following Goodman--Graber and Mathas.   We use the results of Section 2 
in order to improve upon these results (these stronger statements will then be used in Section 4). 
  Section 4 contains the main results of the paper.
 Given a cell module for a diagram algebra, we examine the action of certain Young subalgebras  on the  Murphy basis.  We hence construct integral bases for the skew modules which are indexed by skew tableaux.

\begin{Acknowledgements*}
The authors are grateful for the financial support received from the Royal Commission for the Exhibition of 1851  and  EPSRC grant EP/L01078X/1.
\end{Acknowledgements*}

\section{Diagram algebras}

For the remainder of the paper, we shall   let $R$ be an integral domain with field of fractions $\mathbb{F}$. 
In this section, we shall define  diagram algebras and recall the construction of their Murphy bases, following \cite{EG:2012}.  We first recall the definition of a cellular algebra, as in \cite{MR1376244}. 
\begin{defn}\label{c-d}
Let $R$ be an integral domain. A {\sf cellular algebra} is a tuple $(A,*,\widehat{A},\unrhd,\mathscr{A})$ where
\begin{enumerate}[label=(\arabic{*}), ref=\arabic{*},leftmargin=0pt,itemindent=1.5em]
\item $A$ is a unital $R$--algebra and $*:A\to A$ is an algebra  involution, that is, an $R$--linear anti--automorphism of $A$ such that $(x^*)^* = x$ for $x \in A$;
\item $(\widehat{A},\unrhd)$ is a finite partially ordered set, and $\Std(\lambda)$, for $\lambda\in\widehat{A}$, is a finite indexing set;
\item The set
\begin{align*}
\mathscr{A}=\big\{c_\mathsf{st}^\lambda  \ \big | \  \text{$\lambda\in\widehat{A}$ and $\mathsf{s},\mathsf{t}\in\Std(\lambda)$}\big\},
\end{align*}
is an $R$--basis for $A$, for which the following conditions hold:
\begin{enumerate}[label=(\alph{*}), ref=\alph{*},itemindent=1.5em]
\item\label{c-d-1} Given $\lambda\in\widehat{A}$, $\mathsf{t}\in\Std(\lambda)$, and $a\in A$, there exist 
coefficients $r(  a, \mathsf{t}, \mathsf{v}) \in R$, for $\mathsf{v}\in\Std(\lambda)$, such that, for all $\mathsf{s}\in\Std(\lambda)$, 
\begin{align}\label{r-act}
c_\mathsf{st}^\lambda a\equiv 
\sum_{\mathsf{v}\in\Std(\lambda)}
r( a; \mathsf{t}, \mathsf{v}) c_{\mathsf{sv}}^\lambda \mod{A^{\rhd\lambda}},
\end{align}
where $A^{\rhd\lambda}$ is the $R$--module with basis 
\begin{align*}
\big\{
c^\mu_\mathsf{st}\ \mid   \text{$\mathsf{s},\mathsf{t}\in\Std(\mu)$ and $\mu\rhd\lambda$}
\big\}.
\end{align*}
\item\label{c-d-2} If $\lambda\in\widehat{A}$ and $\mathsf{s},\mathsf{t}\in\Std(\lambda)$, then $(c_\mathsf{st}^\lambda)^*=(c_{\stt\sts}^\lambda)$.
\end{enumerate}
\end{enumerate}
The tuple $(A,*,\widehat{A},\unrhd,\mathscr{A})$ is a  {\sf cell datum} for $A$. 
   The basis $\mathscr{A}$ is called a {\sf  cellular basis} of $A$.  
\end{defn}
 
 We let $A^\FF $ denote the algebra $A\otimes_R \FF$.  We say that $A^\FF$ is a cellular algebra,
  with cell-datum $(A^\FF,*,\widehat{A},\unrhd,\mathscr{A}^\FF)$,
   where the  basis $\mathscr{A}^\FF$ is given by 
   $\{c^\lambda_{\sts\stt}	\otimes 1_\FF	\mid  \lambda\in\widehat{A},  \text{ and }\mathsf{s},\mathsf{t}\in\Std(\lambda) \}$.  When no confusion is possible,  we denote an element 
  $a \otimes 1_\FF \in A^\FF$  simply by  $a  \in A^\FF$.

 \begin{defn} \label{definition: cell module}
Let $A$ be a cellular algebra over the integral domain $R$.
Given  $\lambda\in\widehat{A}$, we define the {\sf cell module} $\Delta^R(\lambda)$  to be the right $A$--module  with $R$-basis 
  $$\{c^\lambda_{\sts\mathsf{t}} + A^{\vartriangleright \lambda} \mid \mathsf{t} \in \Std(\lambda) \},$$
  for any fixed $\sts \in\Std(\lambda)$.     
 We let  $\Delta^\FF(\lambda)$ denote the module  $\Delta^R(\lambda)\otimes_R \FF$.  
 
  \end{defn}

\begin{defn}[\mbox{\cite{MR3065998}}]
A cellular algebra, $A$,  is said to be {\sf cyclic cellular} if every cell module  is cyclic as an $A$-module.   \end{defn}

\begin{defn}
A cellular algebra $A$ over the integral domain $R$ is said to be {{\sf generically semisimple}} if 
$A^\FF=A\otimes_R \FF$ is semisimple.   
\end{defn}

\begin{rmk}Throughout this paper we shall assume that all our algebras are cyclic cellular and generically semisimple.
\end{rmk}

\subsection{Strongly coherent towers of cyclic cellular  algebras} 
In this section, we  recall the definitions and Murphy bases  of strongly coherent towers of cyclic cellular  algebras.  
 \begin{defn} \label{definition: cell-filtration}  Let $A$ be a cyclic cellular  algebra over $R$.  
 If $M$ is a right $A$--module, a  {\sf  cell-filtration}  of  $M$  is a filtration by right $A$--modules
\begin{align*}
\{ 0\}= M_0  \subseteq   M_1  \subseteq \cdots  \subseteq    M_r=M,
\end{align*}
such that $  M_{i} / M_{i-1} \cong \Delta^R(\lambda^{(i)})$ for some $\lambda^{(i)} \in \widehat{A}$.   We  say that the filtration is {\sf  order preserving} if  $\lambda^{(i)}\rhd \lambda^{(i+1)}$ in
  $\widehat{A}$  for all $i\geq 1$.  
 \end{defn}

Here and in the remainder of the paper, we will let $(A_k)_{k \ge 0}$ denote an   increasing sequences
$$
R= A_0 \subseteq A_1 \subseteq A_2 \cdots
$$
of cyclic cellular  algebras over an integral domain $R$.   
We assume that all the inclusions are unital and that the involutions are consistent; that is the involution on $A_{k+1}$, restricted to $A_k$, agrees with the involution on $A_k$.

 \begin{defn}[\cite{MR2794027,MR2774622}]  The tower of cyclic cellular  algebras $(A_k)_{k \ge 0}$ is {\sf  coherent}  if the following conditions are satisfied:
\begin{enumerate} 
\item  For each $k\ge 0$ and each cell module $\Delta^R_k(\lambda)$ for $\lambda \in \widehat{A}_k$,  the induced module $\Ind_{A_k}^{A_{k+1}}(\Delta^R_k(\la))$   has a  cell-filtration. 
\item  For each $k \ge 0$ and each  cell module $\Delta^R_{k+1}(\mu)$   for $\mu \in \widehat{A}_{k+1}$,    the restricted module $\Res_{A_k}^{A_{k+1}}(\Delta^R_{k+1}(\mu))$  has a cell-filtration. 
\end{enumerate}
The tower is called {\sf  strongly coherent} if the cell-filtrations can be chosen to be order preserving.
 \end{defn}

 \begin{defn}
Let  $(A_k)_{k \geq 0}$ denote a strongly coherent tower of cyclic cellular algebras. We define the branching graph, $\widehat{A}$, as follows.  The set of vertices on the $k$th level of $\widehat{A}$ is given by the set $\widehat{A}_k$.  Given $\lambda \in \widehat{A}_k$ and $\mu \in \widehat{A}_{k+1}$, 
there is an edge $\la \to \mu$ if and only if the module $\Delta^R_k(\la)$ appears in the cell-filtration of 
$\Res^{A_{k+1}}_{A_{k}}(\Delta^R_{k+1}(\mu))$.  
 \end{defn}

  \begin{defn}
Given   $\lambda  \in   \widehat{A}_{r-s}$, $\nu \in \widehat{A}_{r}$, we  
define a  {\sf  skew  standard   tableau} of shape $ \nu \setminus  \lambda  $ and degree $s$  to be a path $\stt$  of the form 
 $$
\lambda = \stt(0) \to \stt(1) \to    \stt(2)\to   \dots \to  \stt(s-1)\to \stt(s) = \nu, 
 $$
 in other words $\stt$ is  a path in the branching graph which begins at $\lambda$ and terminates at $\nu$.  
We let $\Std_s(\nu \setminus  \lambda)$ denote the set of all such paths; if $\lambda \in \widehat{A}_0$ then we let $\Std_r(\nu ):= \Std_r(\nu \setminus  \lambda)$.  
 \end{defn}

\begin{thm}[\cite{EG:2012}]\label{basisiss}
Let $(A_k)_{k\geq 0}$ denote a strongly coherent tower of generically semisimple cyclic cellular algebras over $R$.
 Given $r\in \mathbb{Z}_{\geq0}$, the algebra $A_r$  has a cellular basis (referred to as the {\sf Murphy basis}) indexed by paths in the branching graph as follows, 
\begin{align}\label{statementabove}
\{d_\sts u_\stt \mid \sts,\stt \in \Std_r(\nu), \nu \in \widehat{A}_r\}.
\end{align}
 Moreover, each basis element can be factorised in the following fashion,
$$ 
d_\sts=  d_{\sts(0) \to \sts(1)}d_{\sts(1) \to \sts(2)} \dots d_{\sts(r-1) \to \sts(r)}
\quad
u_\stt= u_{\stt(r-1) \to \stt(r)}u_{\stt(r-2) \to \stt(r-1)} \dots u_{\stt(0) \to \stt(1)}
$$ 
where the {\sf branching coefficients},
 $d_{\sts(k) \to \sts(k+1)}$ (respectively $u_{\stt(k) \to \stt(k+1)}$) depend only on the vertices
  $\sts(k) \in \widehat{A}_k$, $\sts(k+1) \in \widehat{A}_{k+1}$ (respectively
   $\stt(k) \in \widehat{A}_k$, $\stt(k+1) \in \widehat{A}_{k+1}$).  
\end{thm}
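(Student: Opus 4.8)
The plan is to argue by induction on $r$, propagating a path-indexed basis one level up the tower from the two defining features of a strongly coherent tower of cyclic cellular algebras: every cell module is cyclic, and the restriction of every cell module admits an \emph{order-preserving} cell-filtration. The base case $r=0$ is trivial: $A_0=R$, the set $\Std_0(\varnothing)$ consists of the single empty path, $\{1\}$ is the required basis, and every branching product is empty.

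In the inductive step I would first manufacture, for each $\mu\in\widehat{A}_r$, an $R$-basis of the cell module $\Delta^R_r(\mu)$ indexed by $\Std_r(\mu)$. Fix $\widehat{m}_\mu\in A_r$ with $\widehat{m}_\mu^{*}=\widehat{m}_\mu$ whose image $m_\mu:=\widehat{m}_\mu+A_r^{\rhd\mu}$ generates $\Delta^R_r(\mu)$ (cyclic cellularity permits this --- one may take a diagonal cellular basis element), so that $\Delta^R_r(\mu)=m_\mu A_r$ as an $R$-module. Strong coherence supplies an order-preserving cell-filtration
$$\{0\}=N_0\subseteq N_1\subseteq\cdots\subseteq N_t=\Res^{A_r}_{A_{r-1}}\Delta^R_r(\mu),\qquad N_i/N_{i-1}\cong\Delta^R_{r-1}(\lambda^{(i)}),\quad\lambda^{(1)}\rhd\cdots\rhd\lambda^{(t)}.$$
Order-preservation is crucial here: it forces $\lambda^{(1)},\dots,\lambda^{(t)}$ to be precisely, and \emph{canonically}, the in-neighbours of $\mu$ in $\widehat{A}$, so each section is labelled by a unique edge $\lambda\to\mu$ rather than merely appearing as ``some subquotient''. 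By the inductive hypothesis each section $\Delta^R_{r-1}(\lambda^{(i)})$ already carries a path-indexed basis built from branching coefficients; since a path in $\Std_r(\mu)$ is precisely a path in $\Std_{r-1}(\lambda)$ to an in-neighbour $\lambda$ of $\mu$ followed by the edge $\lambda\to\mu$, I would lift the section bases into $\Delta^R_r(\mu)$ and re-index by $\Std_r(\mu)$. That these lifts form an $R$-basis follows from the branching rank count $\operatorname{rank}\Delta^R_r(\mu)=\sum_{\lambda\to\mu}\operatorname{rank}\Delta^R_{r-1}(\lambda)=|\Std_r(\mu)|$ together with the fact that the filtration is by free $R$-modules with free quotients. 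The real content is that the lift of each section generator $m_{\lambda^{(i)}}$ may be chosen, compatibly with $m_\mu$, as $m_\mu$ times a fixed ``branching coefficient'' depending only on the edge $\lambda^{(i)}\to\mu$, so that the resulting basis element $m_\mu b_\stt$ factors through one branching coefficient per edge of $\stt$. Securing this coherent choice --- the tower analogue of Murphy's lemma --- is where cyclic cellularity and coherence are used in an essential and somewhat delicate way.

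To pass from cell modules to a cellular basis of $A_r$ I would run the standard construction of a Murphy-type basis from cyclic generators. Since $\{\widehat{m}_\mu b_\stt\}$ descends to an $R$-basis of $(\widehat{m}_\mu A_r+A_r^{\rhd\mu})/A_r^{\rhd\mu}$ and, via the involution, $\{b_\sts^{*}\widehat{m}_\mu\}$ descends to an $R$-basis of its left-hand analogue, a downward induction along $\unrhd$ on $\widehat{A}_r$ (using $\widehat{m}_\mu A_r\widehat{m}_\mu\subseteq R\widehat{m}_\mu+A_r^{\rhd\mu}$) shows that $\{b_\sts^{*}\widehat{m}_\mu b_\stt\mid\mu\in\widehat{A}_r,\ \sts,\stt\in\Std_r(\mu)\}$ is again a cellular basis of $A_r$, with cellular order $\unrhd$ and the involution interchanging $\sts$ and $\stt$. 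Re-bracketing $b_\sts^{*}\widehat{m}_\mu b_\stt$ --- distributing $\widehat{m}_\mu$ and the involution over the products of branching coefficients and regrouping factors level by level --- then rewrites each basis element in the asserted form $d_\sts u_\stt$, with $d_\sts=d_{\sts(0)\to\sts(1)}\cdots d_{\sts(r-1)\to\sts(r)}$ and $u_\stt=u_{\stt(r-1)\to\stt(r)}\cdots u_{\stt(0)\to\stt(1)}$, every branching coefficient depending only on its two incident vertices.

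I expect the main obstacle to be the two ``coherent-choice'' points just flagged. First, arranging the branching coefficients so that they are genuinely \emph{edge-local} and simultaneously consistent across \emph{all} levels of the tower --- an element of $A_{k+1}\subseteq A_r$ must play the same role whether the edge naming it is the top edge of $A_{k+1}$ or an intermediate edge of $A_r$ --- which obliges one to set up the inductive construction of the $d$'s and $u$'s with care. Second, checking that the assembled family satisfies the cellularity relation \eqref{r-act}, i.e.\ that the structure constants are triangular with respect to $\unrhd$; for this I would base-change to $\FF$, where generic semisimplicity splits the cell-filtrations and pushes the off-diagonal terms into $A_r^{\rhd\mu}$, and then transport the statement back to $R$ using freeness of the modules involved. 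The remaining bookkeeping turning the abstract products $b_\sts^{*}\widehat{m}_\mu b_\stt$ into the clean edge-by-edge factorisation is routine but is where most of the work in \cite{EG:2012} resides.
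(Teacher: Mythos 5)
The paper offers no proof of this theorem --- it is quoted directly from \cite{EG:2012} --- but your sketch reconstructs exactly the strategy of that reference: induction up the tower, using cyclic generators of the cell modules together with the order-preserving cell-filtrations of restrictions to choose edge-local branching coefficients, and then assembling the products $b_\sts^{*}\widehat{m}_\mu b_\stt$ into a cellular basis of $A_r$. The two delicate points you flag (making the branching coefficients consistently edge-local across all levels, and verifying the cellularity relation by passing to $\FF$ and descending back to $R$) are precisely where the substance of Enyang--Goodman's argument lies, so your outline is essentially the same approach as the cited source.
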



\begin{rmk}
By \cite[Section 3.4]{EG:2012}, we have that $u_\stt + A_r^{\vartriangleright  \nu}$ is a non-zero element of $A_r^{\unrhd \nu} / A_r^{\vartriangleright  \nu} $ 
for any $\stt \in \Std_r(\nu)$.  Therefore by condition $(3a)$ of \cref{c-d}, we have that 
$$ \Delta^R_r(\nu) \cong  \{d_\sts u_\stt + A_r^{\vartriangleright  \nu} \mid \stt \in \Std_r(\nu) \} \cong   \{u_\stt + A_r^{\vartriangleright  \nu} \mid \stt \in \Std_r(\nu) \}$$
as right $A_r$-modules,  for any fixed $\sts  \in \Std_r(\nu) $.  
We shall therefore use the simplified  the notation on the right-hand side of the above equation  for  the reminder of the  paper.  
\end{rmk}

\begin{eg}[\cite{EG:2012}]\label{ANWWERTWE}
Consider the chain of symmetric groups $(\mathfrak{S}_k)_{k\geq 0}$ under the obvious embedding.  The branching graph $\widehat{\mathfrak{S}}$ is the usual Young graph, with 
$\widehat{\mathfrak{S}}_k$ equal to the set of partitions of $k$. 
 There is an edge $\la \to \mu$ if $\mu$ is obtained from $\la$ by adding a single node.  The first few levels of this graph are depicted in \hyperref[asdhjkdsf]{Figure~\ref*{asdhjkdsf}}.
  The resulting cellular  basis  (as in \hyperref[basisiss]{Theorem~\ref*{basisiss}}) is the Murphy basis from \cite{MR1327362}.

\end{eg}

\begin{defn}[see \cite{MR2774622}]\label{orders on paths}
 We have two orderings on   $ \Std_s(\nu\setminus\la)$, as follows.  
   Let  
   $\la \in \widehat{A}_{r-s}$,    $\nu \in \widehat{A}_{r}$
   and $\mathsf{s}  , \mathsf{t}  \in \Std_s(\nu\setminus\la)$. 
\begin{itemize}
\item  We say that $\sts$  precedes $\stt$ in  the  {\sf  reverse lexicographic  order}  (denoted $\sts \preceq \stt$)  if $\sts = \stt$, or if  for the last index $k$ such that  $\sts(k)\ne \stt(k)$, we have $\sts(k) \vartriangleleft  \stt(k)$ in
$\widehat A_{k}$. 
\item 
 We say that $\sts$  precedes $\stt$ in the {\sf  dominance  order}  (denoted $\sts \trianglelefteq \stt$)  if
   $\sts(k)\trianglelefteq\stt(k)$  in
$\widehat A_{k}$ for all $r-s \leq k \leq r$. 
 \end{itemize}
Clearly the latter ordering is a coarsening of  the former.     
\end{defn}
\begin{rmk}
In previous works on diagram algebras, \cite{MR3092697,EG:2012,MR2794027,MR2774622}, the  reverse lexicographic ordering has been used 
as the standard ordering on tableaux.   
 In \hyperref[sec:mainresult]{Section~\ref*{sec:mainresult}}, we shall show that the reverse lexicographic  ordering can be replaced with the (coarser) dominance ordering.  
\end{rmk}
\subsection{The Jones basic construction}
    First recall that 
 an {\sf  essential idempotent} in an algebra $A$ over a ring $R$ is an element $e$ such that 
$e^2 = \delta e$ for some non--zero $\delta \in R$.   

 \begin{defn}[\cite{EG:2012}]Let $R$ be an integral domain with field of fractions $\FF$ and consider two towers of $R$-algebras with common multiplicative identity,
\begin{align}\label{p-r}
A_0\subseteq A_1\subseteq A_2\subseteq\cdots \qquad \text{and} \qquad
H_0\subseteq H_1\subseteq H_2\subseteq\cdots.
\end{align}
Suppose  that the two towers satisfy the following list of axioms:
\begin{enumerate}
\item \label{axiom: involution on An}  There is an algebra involution $*$  on $\cup_k A_k$ such that $(A_k)^* = A_k$, and likewise, there is 
an algebra involution $*$  on $\cup_k H_k$ such that $(H_k)^* = H_k$.
\item  \label{axiom: A0 and A1}  
$A_0 = H_0 = R$   and $A_1 = H_1$  (as algebras with involution).
\item \label{axiom:  idempotent and Hn as quotient of An}
 For $k \ge 2$,  $A_k$ contains an  essential idempotent $e_{k-1}$ such that $e_{k-1}^* = e_{k-1}$ and
\break $A_k/(A_k e_{k-1} A_k)  \cong H_k$ as algebras with involution.

\item \label{axiom: en An en} For $k \ge 1$,   $e_{k}$ commutes with $A_{k-1}$ and $e_{k} A_{k} e_{k} \subseteq  A_{k-1} e_{k}$.
\item  \label{axiom:  An en}
For $k \ge 1$,  $A_{k+1} 	e_{k} = A_{k} e_{k}$,  and the map $x \mapsto x e_{k}$ is injective from
$A_{k}$ to $A_{k} e_{k}$.
\item \label{axiom: e(n-1) in An en An} For $k \ge 2$,   $e_{k-1} \in A_{k+1} e_k A_{k+1}$.
\item \label{axiom Hn coherent}  $(H_k)_{k \ge 0}$ is a strongly coherent tower of cellular algebras.
\item  \label{axiom:  Delta J}
For $k \ge 2$,   $e_{k-1} A_k  e_{k-1}  A_k  =   e_{k-1}  A_k$.
 \item \label{axiom: Hn cyclic cellular}  Each $H_k$ is a cyclic cellular algebra. 
\item  \label{axiom: semisimplicity}
For all $k\geq 0$, the algebra  $A_k^\FF : = A_k \otimes_R \FF$   is split semisimple.  
\end{enumerate}
 In this  case we say that the tower $(A_k)_{k\geq 0}$ is a tower of {\sf diagram algebras}  obtained {\sf  by reflections} from the tower of algebras $(H_k)_{k\geq 0}$.  
 \end{defn}
 
 The main theorem from \cite{EG:2012}    in which we are interested is the following.      
 
\begin{thm}[\cite{EG:2012}] \label{theorem:  closed form determination of the branching factors}
Suppose that  $(A_k)_{k\geq 0}$ is a tower of  diagram algebras   
obtained  by reflections from the tower   $(H_k)_{k\geq 0}$.  Let $\widehat{H}$ denote
 the branching graph for $(H_k)_{k\geq 0}$ and 
 let  $$\bar{d}^{(k)}_{\lambda\to \mu}\qquad \bar{u}^{(k)}_{\lambda\to \mu}$$ denote the branching coefficients for 
  $\la \in \widehat{H}_k$, 
 $\mu \in \widehat{H}_{k+1}$ and $k\in \mathbb{Z}_{\geq0}$.  
 
 In which case,
the tower of algebras $(A_k)_{k\geq 0}$ is a strongly coherent tower of generically semisimple cyclic cellular algebras.  Moreover, the branching graph $\widehat{A}$   is {\sf obtained by reflections}   from $\widehat{H}$ in the following fashion.  
  The set, $\widehat{A}_k$, of  vertices on level $k$ is given by
  $$
 \widehat{A}_k= \{(\lambda,l) \mid 0 \leq l \leq \lfloor k/2 \rfloor \text{ and }\la \in \widehat{H}_{k-2l}\}.
  $$
Given $(\la,l)\in \widehat{A}_k$ and  $(\mu,m)\in \widehat{A}_{k+1}$, there is an edge $(\la,l) \to (\mu,m)$ only if $m\in \{l,l+1\}$; moreover 
\begin{itemize}
\item $(\la,l) \to (\mu, l)$ in $\widehat{A}$ if and only if $\la \to \mu$ in $\widehat{H}$;
\item $(\la,l) \to (\mu, l+1)$ in $\widehat{A}$ if and only if $\mu \to \la$ in $\widehat{H}$.  
\end{itemize}
The branching factors  for the tower $(A_k)_{k\ge 0}$ can be given in terms of the branching factors of $(H_k)_{k\geq 0}$ in the following fashion:
\begin{itemize}
\item $d_{(\la, l  ) \to (\mu, l  )}^ {(k+1)} = \bar{d}_{ \la \to \mu}^ {(k + 1 - 2l ) }  e_{k-1}^{(l  )}$;
\item $u_ {(\la, l  ) \to (\mu, l  )} ^{(k+1)} = \bar{u}_{ \la \to \mu}^ {(k + 1 - 2l ) }  e_{k} ^ {(l ) }$;
\item $d_ {(\la, l  ) \to (\mu, l  +1)} ^{(k+1)}  =  \bar{u}_{ \mu \to \la} ^{(k-2l ) } e_{k-1} ^{(l )} $;  
\item   $u_ {(\la, l  ) \to (\mu, l  +1)}^ {(k+1)}  = \bar{d}_{ \mu \to \la}^ {(k-2l ) } e_{k}^ {(l  +1)}$; 
\end{itemize}
for $(\lambda,l) \in \widehat{A}_k$ and $(\mu,m) \in \widehat{A}_{k+1}$ and $k \in \mathbb{Z}_{\geq 0}$.  
\end{thm}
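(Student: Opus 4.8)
This is the content of \cite{EG:2012}; we recall the strategy. The argument runs by induction on $k$, the engine being the \emph{Jones basic construction} attached to the essential idempotents $e_{k-1}$. Passing to the field of fractions, the relation $e_{k-1}^2 = \delta e_{k-1}$ with $\delta\in R\setminus\{0\}$ becomes a genuine idempotent relation after rescaling, so $A_k^\FF e_{k-1}A_k^\FF$ is a non-degenerate two-sided ideal, and the axioms governing $e_{k-1}$ (it commutes with $A_{k-1}$; $e_k A_k e_k\subseteq A_{k-1}e_k$; $x\mapsto xe_k$ is injective on $A_{k}$; $e_{k-1}A_k e_{k-1}A_k = e_{k-1}A_k$) supply a Morita-type context identifying $A_k^\FF e_{k-1}A_k^\FF$ with a full matrix amplification of $A_{k-2}^\FF$. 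Combined with $A_k/(A_k e_{k-1}A_k)\cong H_k$, this exhibits the simple $A_k^\FF$-modules as the simple $H_k^\FF$-modules together with the simple $A_{k-2}^\FF$-modules; unwinding the recursion gives the vertex set $\widehat{A}_k=\{(\lambda,l)\}$ and, inductively, generic semisimplicity.

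For the cell datum on $A_k$ one builds two families of cell modules. The \emph{new} ones, indexed by $(\lambda,0)$ with $\lambda\in\widehat{H}_k$, are the $H_k$-cell modules $\Delta^H_k(\lambda)$ pulled back along $A_k\twoheadrightarrow H_k$. The \emph{old} ones, indexed by $(\lambda,l)$ with $l\geq 1$, are the images of $\Delta^R_{k-2}(\lambda,l-1)$ under the basic-construction functor $(-)\otimes_{A_{k-2}}e_{k-1}A_k$ (via the identification $e_{k-1}A_k e_{k-1}\cong A_{k-2}e_{k-1}$). The Murphy basis of \cref{basisiss} is assembled from the pulled-back basis of $H_k$ on the new part and, on the old part, by left- and right-multiplying the inductively constructed basis of $A_{k-2}$ by branching coefficients of the form $\bar u^{(k-2l)}_{\mu\to\lambda}\,e_k^{(l)}$ (and their $*$-images), where $e_k^{(l)}:=e_k e_{k-2}\cdots e_{k-2l+2}$ is the chain of idempotents dropping $2l$ levels. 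Conditions (a) and (b) of \cref{c-d} are then checked: the anti-automorphism property is immediate from $e_{k-1}^*=e_{k-1}$ and the consistency of the involutions, while the straightening relation modulo $A_k^{\rhd(\lambda,l)}$ is reduced, using the $e$-axioms, to the straightening relations already available in $H_k$ and in $A_{k-2}$. Cyclicity of every cell module follows since $H_k$ is cyclic cellular and the basic-construction functor carries a cyclic generator to a cyclic generator.

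It remains to compute $\Res^{A_{k+1}}_{A_k}\Delta^R_{k+1}(\mu,m)$ and read off the coefficients. If $m=0$, then since $e_{k-1}\in A_{k+1}e_k A_{k+1}$ the idempotent $e_{k-1}$ annihilates $\Delta^R_{k+1}(\mu,0)$, so the restriction factors through $A_k\twoheadrightarrow H_k$ and, as an $H_k$-module, equals $\Res^{H_{k+1}}_{H_k}\Delta^H_{k+1}(\mu)$; by strong coherence of $(H_k)_k$ this has an order-preserving cell-filtration with subquotients $\Delta^H_k(\lambda)$ for $\lambda\to\mu$ in $\widehat{H}$, which pulls back to the edges $(\lambda,0)\to(\mu,0)$. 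For general $m=l$ one first strips off the chain $e_k^{(l)}$ through the basic construction, reducing to the already-known statement at level $k+1-2l$; for $m=l+1$ the cell module lies in the ideal $A_{k+1}e_k A_{k+1}$, and the basic-construction identification turns it into a module induced from $\Delta^R_{k-1}(\mu,l)$, whose structure is governed by the \emph{reversed} edges $\mu\to\lambda$ of $\widehat{H}$. Tracking the branching coefficients through these two identifications — an $\widehat{H}$-branching at level $k+1-2l$ (or its reverse), post-composed with multiplication by the appropriate idempotent chain $e_{k-1}^{(l)}$, $e_k^{(l)}$, respectively $e_k^{(l+1)}$ — yields the four displayed formulas, and the resulting filtrations are order preserving because the $\widehat{H}$-filtrations are and because right multiplication by the idempotent chains is compatible with the dominance order on $\widehat{A}_k$.

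\emph{Main obstacle.} The delicate point is not any individual verification but the bookkeeping needed to keep the cell-filtrations of the restricted and induced modules order preserving \emph{simultaneously} with obtaining the branching coefficients in the stated closed form; one must synchronise the reflection combinatorics producing $\widehat{A}$ with the algebra of the idempotent chains $e_k^{(l)}$ at every step of the induction. A secondary technical nuisance is that $e_{k-1}$ is only essential over $R$, so the Wedderburn-theoretic count of $\widehat{A}_k$ must be performed over $\FF$ and the resulting cellular basis then descended to an $R$-basis of $A_k$ — this is where split semisimplicity and the integral structure inherited from the $H_k$ are used.
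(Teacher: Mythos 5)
The paper offers no proof of this theorem: it is imported verbatim from \cite{EG:2012}, so there is nothing internal to compare your argument against. Your sketch is a faithful recollection of the strategy actually used in that reference — the inductive Jones-basic-construction dichotomy between ``new'' cell modules pulled back from $H_k$ and ``old'' ones obtained from $A_{k-2}$ through the ideal $A_k e_{k-1}A_k$, with the branching factors assembled from $\widehat{H}$-branching coefficients multiplied by chains of the essential idempotents — and your identification of the main obstacle (synchronising order-preservation of the cell-filtrations with the closed-form coefficients across the induction) matches where the real work in \cite{EG:2012} lies.
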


\begin{rmk}\label{dsfakjhasdfjhksadf}
We note that the map  $\widehat{A}_k \to \cup_{0 \leq l \leq \lfloor k/2\rfloor }\widehat{H}_{k-2l}$
given by $:(\la,l) \to \la$ is a bijection.  Therefore, when no confusion is possible, we identify the  indexing labels under this bijection. 
\end{rmk}
 
 \begin{eg}
 Examples of algebras which fit into the above framework include
  (cyclotomic) Brauer, walled Brauer, partition, Temperley--Lieb, and Birman--Murakami--Wenzl algebras.  
 \end{eg}
 
  \begin{figure}[ht!]
  $$  
\scalefont{0.9}\begin{tikzpicture}[scale=0.6]
          \begin{scope}  
     \draw (-3,3) node {   $\left({\scalefont{1}\varnothing} ,0 \right)$  };   
              \draw (-3,0) node { $ \left(\;\Yvcentermath1\Yboxdim{6pt}\gyoung(;) \;,0 \right) $  };   
    \draw (-3,-3) node   {  $\left(\varnothing ,1\right)$ }		; 
    \draw (0,-3) node   {  $ \left(\; \Yvcentermath1\Yboxdim{6pt}\gyoung(;;) \;, 0\right)	$}		; 
       \draw (+3,-3) node   {  $ \left(\; \Yvcentermath1\Yboxdim{6pt}\gyoung(;,;) \;, 0\right)	$}		;  
      \draw (3,-6) node    {   $ \left(\; \Yvcentermath1\Yboxdim{6pt}\gyoung(;;,;) \;, 0\right)	$}		;
      \draw (0,-6) node    {  $ \left(\; \Yvcentermath1\Yboxdim{6pt}\gyoung(;;;) \;, 0\right)	$}		;    
        \draw (6,-6) node    {  $ \left(\; \Yvcentermath1\Yboxdim{6pt}\gyoung(;,;,;) \;, 0\right)	$}		;

        \draw (-3,-6) node    {  $ \left(\; \Yvcentermath1\Yboxdim{6pt}\gyoung(;) \;, 1\right)	$}		; 
    \draw (0.0-3,0.5) -- (0-3,2.5);   
                              \path (0,-3.6) edge[decorate]  node[auto] { \ } (0,-5.4);   
                      \path (3,-3.6) edge[decorate]  node[auto] { \ } (3,-5.4);   
            \path (0.5,-3.6) edge[decorate]  node[auto] { \ } (2.5,-5.4);
                        \path (3.5,-3.6) edge[decorate]  node[auto] { \ } (5.5,-5.4);    
     \path (-0.5,-3.6) edge[decorate]  node[auto] { \ } (-2.5,-5.4);  
          \path (2.5,-3.6) edge[decorate]  node[auto] { \ } (-2,-5.4);   
                    \path (-3,-3.6) edge[decorate]  node[auto] { \ } (-3,-5.4);   
                     \path (-2.5,-0.6) edge[decorate]  node[auto] { \ } (-0.5,-2.4);  
          \path (2.5,-2.4) edge[decorate]  node[auto] { \ } (-2,-0.6);   
                    \path (-3,-2.4) edge[decorate]  node[auto] { \ } (-3,-0.6);

             \end{scope}\end{tikzpicture}
$$
\caption{The first few  levels of the branching graph, $\widehat{{B}}$, of the Brauer algebras.  }
\label{asdhjkdsf2}
\end{figure}
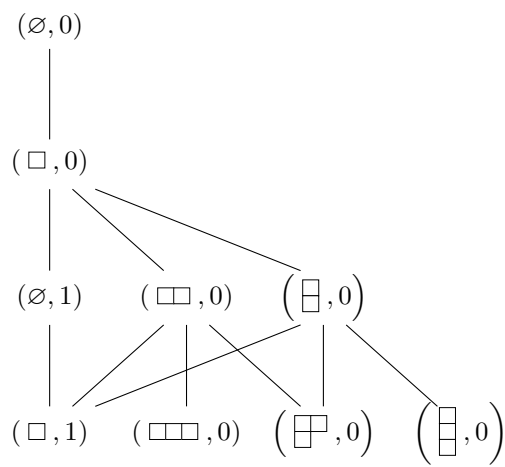

\begin{eg}[\cite{EG:2012}]
Let $B_k$ denote the Brauer algebra on $k$ strands defined in \cite{MR1503378}.
   Wenzl \cite{MR951511} showed that the   Brauer algebras form a tower of diagram algebras, $(B_k)_{k\geq 0}$, obtained `by reflections' from the  (tower of) symmetric groups (see \cref{ANWWERTWE}).
  Therefore, 
the branching graph of the Brauer algebras $(B)_{k\geq 0}$    is that obtained from $\widehat{\mathfrak{S}}$ by reflections.  This graph 
 has vertices on level $k$ given by the set 
\[
\big\{(\lambda,l) \mid 0 \leq l \leq \lfloor k/2 \rfloor \text{ and }\la \in \widehat{\mathfrak{S}}_{k-2l}\}.\}
\]
  There is an edge $(\la,l) \to (\mu,l)$  if the partition $\mu$ is obtained from  the partition  $\la$ by adding a single node.  
  There is an edge $(\la,l) \to (\mu,l+1)$  if  the partition  $\mu$ is obtained from  the partition  $\la$ by removing a single node.  
  The first few levels of  graph are depicted in \cref{asdhjkdsf2}.
 \end{eg}

\section{Filtrations of restricted modules} 
\label{section2}

For the remainder of this paper, we shall assume that $(A_k)_{k\geq0}$ is a 
strongly coherent tower of generically semisimple cyclic cellular algebras.  
  The following small technical lemma allows us to reduce the problem of constructing
 submodules (in particular, skew modules) to the semisimple case.  
\begin{lem}\label{lifting lemma}
Let $B^R$ (respectively $A^R$) denote an $R$-algebra
and let $B^\FF$  (respectively $A^\FF$) denote the algebra $B^R\otimes_R \FF$ 
(respectively  $A^R\otimes_R \FF$).  Suppose that $\iota_1:A^R \to B^R$ is an embedding of $R$-algebras.  
 Let $M^R$  denote a right $B^R$-module with $R$-basis $\lbrace m_1,\ldots,m_k\rbrace$ and let    $N^R$ denote an $R$-submodule of $M^R$ with $R$-basis
  $\lbrace m_1\ldots,m_j\rbrace\subseteq \lbrace m_1\ldots,m_k\rbrace$.
  We let $\iota_2$ denote the obvious linear inclusion of $R$-modules $\iota_2: N^R \to M^R$ and 
  let $M^\mathbb{F}=M^R\otimes_R \mathbb{F}$, $N^\mathbb{F}=N^R\otimes_R \mathbb{F}$.   
 
  Assume that 
 the subspace $N^\FF \subseteq M^\FF$ is an $A^\mathbb{F}$-submodule. Then $N^R$ carries the structure of an  $A^R$-module, with action given by
\begin{align*}
N^R\times A^R&\to N^R; \quad
(n,a) \mapsto \iota_2(n)\iota_1(a), 
\end{align*}
{for $n\in N^R,a\in A^R$.}
\end{lem}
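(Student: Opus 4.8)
The plan is to verify the module axioms directly, using $\mathbb{F}$-linearity to transport the already-known $A^\FF$-module structure on $N^\FF$ back down to $R$. The only thing in doubt is whether the prescribed operation $(n,a)\mapsto \iota_2(n)\iota_1(a)$ actually lands in $N^R$ rather than merely in $M^R$; once closure is established, associativity, $R$-bilinearity and the unital property are inherited for free from the $B^R$-module structure of $M^R$ (since $\iota_1$ is an algebra embedding, in particular unital, and $\iota_2$ is the inclusion of $R$-modules).

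First I would set up the ambient identifications: because $R$ is an integral domain with fraction field $\FF$, the natural map $M^R\to M^\FF$, $m\mapsto m\otimes 1$, is injective, and similarly for $N^R\to N^\FF$; moreover $N^\FF$ is precisely the $\FF$-span inside $M^\FF$ of $\{m_1,\dots,m_j\}$, so $N^R = M^R\cap N^\FF$ inside $M^\FF$ (an element of $M^R$ lies in $N^\FF$ iff, when written in the basis $m_1,\dots,m_k$, its last $k-j$ coordinates vanish, and those coordinates are the same whether computed over $R$ or over $\FF$). This is the key bookkeeping observation and it is where the hypothesis ``$N^R$ has $R$-basis a subset of the $R$-basis of $M^R$'' is used.

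Next, the closure step. Fix $n\in N^R$ and $a\in A^R$. Viewing everything inside $M^\FF$, the element $\iota_2(n)\iota_1(a)$ is the image of $n\otimes 1$ under the action of $\iota_1(a)\otimes 1\in A^\FF$ on $M^\FF$ — here one uses that the $A^\FF$-action on $M^\FF$ is the restriction along $\iota_1$ of the $B^\FF$-action, which in turn is the $\FF$-linear extension of the $B^R$-action on $M^R$, so the two possible readings of ``$\iota_2(n)\iota_1(a)$'' (compute in $M^R$ then include, or compute in $M^\FF$) agree. Since $N^\FF$ is an $A^\FF$-submodule of $M^\FF$ and $n\otimes 1\in N^\FF$, we get $\iota_2(n)\iota_1(a)\in N^\FF$. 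On the other hand $\iota_2(n)\iota_1(a)\in M^R$ because $n\in M^R$, $\iota_1(a)\in B^R$, and $M^R$ is a $B^R$-module. By the identification $N^R = M^R\cap N^\FF$, we conclude $\iota_2(n)\iota_1(a)\in N^R$, as required.

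Finally I would note that with closure in hand the remaining axioms are immediate: for $n\in N^R$ and $a,a'\in A^R$ we have $(na)a' = \iota_2(n)\iota_1(a)\iota_1(a') = \iota_2(n)\iota_1(aa') = n(aa')$ since $\iota_1$ is a ring homomorphism and $M^R$ is associative over $B^R$; $n\cdot 1_{A^R} = \iota_2(n)\iota_1(1_{A^R}) = \iota_2(n)1_{B^R} = n$ since $\iota_1$ is unital; and $R$-bilinearity of $(n,a)\mapsto\iota_2(n)\iota_1(a)$ follows from $R$-linearity of $\iota_2$, $\iota_1$ and of the $B^R$-action. The main (really the only) obstacle is the closure step, and the crux there is the compatibility diagram asserting that the $A^\FF$-action on $M^\FF$ restricts, on the $R$-lattice $M^R$ with $\iota_1(A^R)\subseteq B^R$ acting, to an $R$-valued operation; this is exactly the content of $M^\FF = M^R\otimes_R\FF$ together with the fact that $\iota_1,\iota_2$ are defined over $R$, so it is routine but worth spelling out.
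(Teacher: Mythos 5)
Your proof is correct and follows essentially the same route as the paper: the paper likewise reduces everything to the closure step, expands $\iota_2(m_i)\iota_1(a)$ in the $R$-basis $\{m_1,\dots,m_k\}$ of $M^R$, and uses the hypothesis that $N^\FF$ is an $A^\FF$-submodule to force the coefficients of $m_{j+1},\dots,m_k$ to vanish, which is exactly your identification $N^R = M^R\cap N^\FF$. The only difference is that you also spell out the remaining module axioms, which the paper leaves implicit.
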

\begin{proof}
Let $a\in A$ and $m_i\in N^R\subseteq M^R$, where $1\le i\le j$. Since $\iota_1(a)\in B^R$ and $\iota_2(m_i)\in N^R$, we have
\begin{align*}
\iota_2(m_i)\iota_1(a)=\sum_{l=1}^kr_lm_l, 
\end{align*}
 where $r_l\in R$ for $1\le l\le k$.   Hence 
\begin{align*}
\left(\iota_2(m_i)\iota_1(a)	\right)\otimes 1_\mathbb{F}=\sum_{l=1}^k(r_l\otimes 1_\mathbb{F})(m_l\otimes 1_\mathbb{F})\in M^\mathbb{F}. 
\end{align*}
Therefore $r_l=0$ whenever $j<l\le k$. Hence  $\iota_2(m_i)\iota_1(a)\in N^R$ for all $1\leq i\leq j$, as required. 
\end{proof}

 \begin{defn}
 Let $\nu\in\widehat{A}_{r}$ and $\lambda\in \widehat{A}_{r-s}$ be such that $\Std_{s}(\nu\setminus\lambda)\ne \emptyset$. We denote
\begin{align*}
M_{s,r}^R(\lambda,\nu)&=\Span_R\left\lbrace  u_\mathsf{t}+ A_r^{\rhd\nu}\suchthat\mathsf{t}\in\Std_r(\nu)\text{ and }\mathsf{t}(r-s)\unrhd \lambda\right\rbrace
\intertext{and}
U_{s,r}^R(\lambda,\nu)&=\Span_R\left\lbrace  u_\mathsf{t}+ A_r^{\rhd\nu}\suchthat\mathsf{t}\in\Std_r(\nu)\text{ and }\mathsf{t}(r-s)\rhd \lambda\right\rbrace.
\end{align*}
We set $M_{s,r}^\mathbb{F}(\lambda,\nu)= M_{s,r}^R(\lambda,\nu)\otimes_R \mathbb{F}$ 
and $U_{s,r}^\mathbb{F}(\lambda,\nu)= U_{s,r}^R(\lambda,\nu)\otimes_R \mathbb{F}$.  
 \end{defn}

The following lemma states that for a subalgebra $A^\FF_{r-s} \subseteq A^\FF_{r}$ under the 
 the tower embedding, restriction of cell modules from $A^\FF_r$ to $A^\FF_{r-s}$  
 is compatible with the cellular basis.  
Recall the conventions from \hyperref[dsfakjhasdfjhksadf]{Remark~\ref*{dsfakjhasdfjhksadf}}.

\begin{lem}\label{LEMMAONE}
Let $\nu\in\widehat{A}_r$ and $\lambda\in \widehat{A}_{r-s}$, where $\Std_{s}(\nu\setminus\lambda)\ne \emptyset$.
Then
$U_{s,r}^\mathbb{F}(\lambda,\nu)\subseteq M_{s,r}^\mathbb{F}(\lambda,\nu)\subseteq \Delta_{r}^\mathbb{F}(\nu)$ as $A_{r-s}^\mathbb{F}$-modules. Moreover 
$
M_{r,s}^\mathbb{F}(\lambda,\nu)/U_{s,r}^\mathbb{F}(\lambda,\nu)$ is isomorphic as an   $A_{r-s}^\mathbb{F}$-module to a direct sum of 
$ \sharp \Std_{s}(\nu\setminus\lambda)$ copies of   
$\Delta_{r-s}^\mathbb{F}(\lambda)$.
\end{lem}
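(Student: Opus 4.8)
The plan is to work entirely over $\FF$, where $A^\FF_r$ is semisimple, and to exploit the multiplicative factorisation of the Murphy basis from \cref{basisiss}. The inclusions $U_{s,r}^\FF(\lambda,\nu)\subseteq M_{s,r}^\FF(\lambda,\nu)\subseteq\Delta_r^\FF(\nu)$ are immediate, since all three spaces are $\FF$-spans of subsets of the basis $\{u_\stt+A_r^{\rhd\nu}\mid\stt\in\Std_r(\nu)\}$ and $\stt(r-s)\rhd\lambda$ implies $\stt(r-s)\unrhd\lambda$; the content is that these spans are $A^\FF_{r-s}$-submodules and that the top quotient has the asserted form. For $\stt\in\Std_r(\nu)$ with $\stt(r-s)=\mu$, write $\stt=\str\ast\stq$, where $\str=\stt|_{[r-s,\,r]}\in\Std_s(\nu\setminus\mu)$ is the terminal segment and $\stq=\stt|_{[0,\,r-s]}\in\Std_{r-s}(\mu)$ the initial segment. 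By the factorisation in \cref{basisiss}, $u_\stt=u_\str u_\stq$, where $u_\str\in A_r$ depends only on the skew tableau $\str$ and $u_\stq\in A_{r-s}$ depends only on $\stq$; and by the Remark following \cref{basisiss}, $\Delta_{r-s}^\FF(\mu)\cong\{u_\stq+A_{r-s}^{\rhd\mu}\mid\stq\in\Std_{r-s}(\mu)\}$ as right $A^\FF_{r-s}$-modules, so for $a\in A^\FF_{r-s}$ one may write $u_\stq a=\sum_{\stv\in\Std_{r-s}(\mu)}r(a;\stq,\stv)\,u_\stv+y$ with $y\in A_{r-s}^{\rhd\mu}$, the $r(a;\stq,\stv)$ being the structure constants of $\Delta_{r-s}^\FF(\mu)$.

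Left-multiplying by $u_\str$ and applying the factorisation once more to write $u_\str u_\stv=u_{\str\ast\stv}$, where $\str\ast\stv\in\Std_r(\nu)$ is the concatenated path with $(\str\ast\stv)(r-s)=\mu$, we obtain in $A^\FF_r$ the identity
\[
u_\stt a=\sum_{\stv\in\Std_{r-s}(\mu)}r(a;\stq,\stv)\,u_{\str\ast\stv}+u_\str y .
\]
The argument then hinges on the following claim $(\star)$: \emph{for every $\str\in\Std_s(\nu\setminus\mu)$ and every $y\in A_{r-s}^{\rhd\mu}$, the image of $u_\str y$ in $A^\FF_r/A_r^{\rhd\nu}$ lies in $U_{s,r}^\FF(\mu,\nu)$} (in particular it lies in $\Delta_r^\FF(\nu)$). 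Granting $(\star)$: if $\stt(r-s)=\mu\unrhd\lambda$, the displayed identity puts $u_\stt a$ into $M_{s,r}^\FF(\mu,\nu)+U_{s,r}^\FF(\mu,\nu)\subseteq M_{s,r}^\FF(\lambda,\nu)$, so $M_{s,r}^\FF(\lambda,\nu)$ is an $A^\FF_{r-s}$-submodule, and the same computation with $\mu\rhd\lambda$ does the same for $U_{s,r}^\FF(\lambda,\nu)$. The quotient $M_{s,r}^\FF(\lambda,\nu)/U_{s,r}^\FF(\lambda,\nu)$ then has $\FF$-basis $\{u_{\str\ast\stq}+U_{s,r}^\FF(\lambda,\nu)+A_r^{\rhd\nu}\mid\str\in\Std_s(\nu\setminus\lambda),\ \stq\in\Std_{r-s}(\lambda)\}$, and, by $(\star)$ with $\mu=\lambda$ (which kills the term $u_\str y$ in the quotient), $u_{\str\ast\stq}\cdot a\equiv\sum_{\stv}r(a;\stq,\stv)\,u_{\str\ast\stv}$ there. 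Since these are the structure constants of $\Delta_{r-s}^\FF(\lambda)$, for each fixed $\str$ the span of $\{u_{\str\ast\stq}+U_{s,r}^\FF(\lambda,\nu)+A_r^{\rhd\nu}\mid\stq\in\Std_{r-s}(\lambda)\}$ is a submodule isomorphic to $\Delta_{r-s}^\FF(\lambda)$, and summing over the $\sharp\Std_s(\nu\setminus\lambda)$ choices of $\str$ gives the asserted direct sum decomposition.

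The crux — and the only step that is not formal — is $(\star)$, which I would prove by induction on $s$, one level at a time; by linearity it suffices to take $y=d_{\mathsf a}u_{\mathsf b}$ a Murphy basis element of $A_{r-s}$ with $\mathsf a,\mathsf b\in\Std_{r-s}(\sigma)$ for some $\sigma\rhd\mu$. For $s=1$ the tableau $\str$ is a single edge $\mu\to\nu$, so $u_\str=u^{(r)}_{\mu\to\nu}$, and one rewrites $u^{(r)}_{\mu\to\nu}d_{\mathsf a}u_{\mathsf b}$ modulo $A_r^{\rhd\nu}$ in the Murphy basis of $A_r$ — using the $d$--$u$ factorisation of \cref{basisiss} and the straightening relations of \cite{EG:2012} — verifying that every surviving term $u_{\stt'}$ satisfies $\stt'(r-1)\rhd\mu$. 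For $s>1$, factor $A_{r-s}\subseteq A_{r-s+1}\subseteq A_r$: if $\str$ begins with the edge $\mu\to\rho$ followed by $\overline{\str}\in\Std_{s-1}(\nu\setminus\rho)$, then $u_\str=u_{\overline{\str}}\,u^{(r-s+1)}_{\mu\to\rho}$; apply the case $s=1$ inside $A^\FF_{r-s+1}$ to see that $u^{(r-s+1)}_{\mu\to\rho}y$ is, modulo $A_{r-s+1}^{\rhd\rho}$, a combination of Murphy basis elements of $A_{r-s+1}$ whose level-$(r-s)$ vertex strictly dominates $\mu$, and then feed these into the inductive hypothesis for $u_{\overline{\str}}$. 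The genuine labour is the bookkeeping for $s=1$ — deciding which cell ideal of $A_r$ the product $u^{(r)}_{\mu\to\nu}d_{\mathsf a}u_{\mathsf b}$ falls into when $\sigma$ strictly dominates $\mu$ — and the semisimplicity of $A^\FF_{r-s}$ can be used to lighten this, for instance by decomposing $A_{r-s}^{\rhd\mu}\otimes_R\FF$ into matrix blocks and splitting the relevant cell filtrations.
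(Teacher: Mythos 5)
Your reduction of the lemma to the claim $(\star)$ is a reasonable way to organise the problem, and the first half of the argument (the factorisation $u_\stt=u_\str u_\stq$, the use of the structure constants of $\Delta^\FF_{r-s}(\mu)$, and the identification of the quotient once $(\star)$ is granted) is sound. The difficulty is that $(\star)$ \emph{is} the lemma: everything else is formal bookkeeping, and your proof of $(\star)$ does not go through as written. For the base case $s=1$ you say that one rewrites $u^{(r)}_{\mu\to\nu}d_{\mathsf a}u_{\mathsf b}$ in the Murphy basis and ``verifies that every surviving term $u_{\stt'}$ satisfies $\stt'(r-1)\rhd\mu$'' --- but this verification is precisely what has to be proved, and nothing in \cref{basisiss} or in general cellularity tells you a priori that $u_\str y$, for $y$ an arbitrary element of the ideal $A^{\rhd\mu}_{r-s}$, even lands in the row $\Span_\FF\{u_\sts\}$ of $A_r^{\unrhd\nu}/A_r^{\rhd\nu}$, let alone in $U^\FF_{s,r}(\mu,\nu)$. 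Worse, the inductive step does not close: applying $(\star)$ for $s-1$ to the error term $z\in A^{\rhd\rho}_{r-s+1}$ produced by the $s=1$ case only tells you that $u_{\overline{\str}}\,z$ is a combination of $u_\sts$ with $\sts(r-s+1)\rhd\rho$, and this places no constraint whatsoever on $\sts(r-s)$, which is what $(\star)$ for $s$ requires ($\sts(r-s)\rhd\mu$). Knowing where a path sits at level $r-s+1$ does not determine where it sits at level $r-s$, so the induction delivers a strictly weaker conclusion than its own hypothesis needs.

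The paper's own proof avoids this trap entirely by never multiplying $u_\str$ against raw ideal elements. It works by induction on the dominance order on $\widehat{A}_{r-s}$ using the central idempotents $E^{\mu}_{r-s}\in A^\FF_{r-s}$ of the (semisimple) subalgebra: one shows $u_\stt E^{\lambda}_{r-s}\equiv u_\stt+\sum_{\sts(r-s)\rhd\lambda}r_\sts u_\sts \bmod A_r^{\rhd\nu}\otimes_R\FF$, the point being that the awkward sum $\sum r_{\stu\stv}\,u_{\stt_{[r-s,r]}}(d_\stu^*u_\stv)$ is identified with $-u_\stt\sum_{\mu\rhd\lambda}E^{\mu}_{r-s}$, which is manifestly an element of the cell module $\Delta^\FF_r(\nu)$ hit by an element of $A^\FF_{r-s}$, and hence lies in $\Delta^\FF_r(\nu)\sum_{\mu\rhd\lambda}E^{\mu}_{r-s}=U^\FF_{s,r}(\lambda,\nu)$ by the inductive hypothesis. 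Since the $E^{\mu}_{r-s}$ are central in $A^\FF_{r-s}$, the resulting identification $M^\FF_{s,r}(\lambda,\nu)=\Delta^\FF_r(\nu)\sum_{\mu\unrhd\lambda}E^{\mu}_{r-s}$ makes the submodule property automatic and gives the multiplicity count for free. If you want to salvage your direct approach, you would need to strengthen $(\star)$ so that the inductive hypothesis records the level-$(r-s)$ vertex of every term (not just the level-$(r-s+1)$ vertex), and you would still owe a genuine proof of the $s=1$ case; the idempotent argument is the cleaner route.
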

\begin{proof}

We proceed by induction on the dominance order. Assume that $\lambda$ is maximal in $\widehat{A}_{r-s}$ subject to the condition that $\Std_{s}(\nu\setminus\lambda)\ne \emptyset$. 
For $\mu \in \widehat{A}_{r-s}$, we  let
 $E^{\mu}_{r-s}  \in {A}_{r-s}^\FF \subseteq {A}_{r}^\FF$ denote the central idempotent which projects  
 onto the simple ${A}_{r-s}^\FF$-module $\Delta^\FF_{r-s}(\mu)$.  
    Let $\mathsf{t}\in\Std_r(\nu)$, where $\mathsf{t}(r-s)=\lambda$ and suppose that $\mu\in\widehat{A}_{r-s}$, where $\mu\rhd\lambda$.  
By the cellularity of $A_{r-s}$ (and our assumption that $\mu\rhd\lambda$)   we have
\[
u_{\stt_{[0,r-s]}}E^{\mu}_{r-s}  \in A^{\vartriangleright  \lambda }_{r-s} \otimes_R \FF.
\]
Thus the maximality of $\lambda$ gives 
\begin{align}\label{dominant go to zero}
u_\stt \sum_{\mu \vartriangleright \lambda} E^{\mu}_{r-s} =
\sum_{\mu \vartriangleright \lambda} u_{\stt_{[r-s,r]}}  u_{\stt_{[0,r-s]}}E^{\mu}_{r-s} 
\in A^{\vartriangleright  \nu }_r \otimes_R \FF
\end{align}
Again  by the cellularity of $A_{r-s}$, we have that
\[
u_\mathsf{t}E^{\lambda}_{r-s}=
u_{\mathsf{t}[r-s,r]}u_{\mathsf{t}[0,r-s]}E^{\lambda}_{r-s} = u_\mathsf{t}+\sum_{\substack{\sigma\rhd\lambda\\ \mathsf{u,v}\in\Std_{r-s}(\sigma)}} r_\mathsf{u,v}u_{\mathsf{t}[r-s,r]} (d_\mathsf{u}^*u_\mathsf{v}).
\]
 Since $\sum_{\mu\rhd\lambda} E^{\mu}_{r-s}$ acts as the identity on the ideal $A_{r-s}^{\rhd \lambda} \otimes_R \FF$, to which the terms of the form $d_\mathsf{u}^*u_\mathsf{v}$ in the above sum belong, the relation $\sum_{\mu\rhd\lambda}E_{ \lambda}^{r-s}E^{\mu}_{r-s}=0$ gives
\[
0=\sum_{\mu\rhd\lambda}u_\mathsf{t}E^{\mu}_{r-s}+\sum_{\substack{\sigma\rhd\lambda\\ \mathsf{u,v}\in\Std_{r-s}(\sigma)}} r_\mathsf{u,v}u_{\mathsf{t}[r-s,r]} (d_\mathsf{u}^*u_\mathsf{v}). 
\]  
However, by equation~\eqref{dominant go to zero}, we have 
\[
\sum_{\mu\rhd\lambda}u_\mathsf{t}E^{\mu}_{r-s}\equiv 0\mod A_{r}^{\rhd \nu} \otimes_R \FF.
\]
Hence 
\(
 {u}_\mathsf{t} E^{ \lambda }_{r-s}=  {u}_\mathsf{t} \mod A_{r}^{\rhd \nu} \otimes_R \FF. 
\)
Therefore as an $A_{r-s}^\mathbb{F}$-module,  
\[
\Span_\mathbb{F}\left\lbrace  u_\mathsf{t}+ A_{r}^{\rhd\nu}\otimes_R \FF \suchthat\mathsf{t}\in\Std_r(\nu)\text{ and }\mathsf{t}(r-s)=\lambda\right\rbrace
\]
is isomorphic to a direct sum of $\sharp\Std_{r-s}(\nu\setminus\lambda)$ copies of $\Delta_{r-s}^\mathbb{F}(\lambda)$, as required. 

Next, we relax the assumption on the maximality of $\lambda$ and let $\mathsf{t}\in \Std_r(\nu)$, where $\mathsf{t}(r-s)=\lambda$. As before, we may write
\[
u_\mathsf{t}E^{\lambda}_{r-s}= u_\mathsf{t}+\sum_{\substack{\sigma\rhd\lambda\\ \mathsf{u,v}\in\Std_{r-s}(\sigma)}} r_\mathsf{u,v}u_{\mathsf{t}[r-s,r]} (d_\mathsf{u}^*u_\mathsf{v})
\]
where 
\begin{align}\label{rightmost term}
0=\sum_{\mu\rhd\lambda}u_\mathsf{t}E^{\mu}_{r-s}+\sum_{\substack{\sigma\rhd\lambda\\ \mathsf{u,v}\in\Std_{r-s}(\sigma)}} r_\mathsf{u,v}u_{\mathsf{t}[r-s,r]} (d_\mathsf{u}^*u_\mathsf{v}).
\end{align}
However, by the inductive hypothesis,
\[
\Delta_r^\mathbb{F}(\nu)\sum_{\mu\rhd\lambda} E^{\mu}_{r-s} = U_{s,r}^\mathbb{F}(\lambda,\nu)
\]
is a right $A_{r-s}^\mathbb{F}$-submodule of $\Delta_r^\mathbb{F}(\nu)$. Hence the rightmost sum in equation~\eqref{rightmost term} can  be rewritten as
\[
\sum_{\substack{\sigma\rhd\lambda\\ \mathsf{u,v}\in\Std_s(\sigma)}} r_\mathsf{u,v}u_{\mathsf{t}[r-s,r]} d_\mathsf{u}^*u_\mathsf{v}\equiv 
\sum_{\substack{\mathsf{s}\in \Std_r(\nu)\\ \mathsf{s}(r-s)\rhd \lambda}}r_\mathsf{s}u_\mathsf{s}\mod A_r^{\rhd\nu} \otimes_R \FF.
\]
Therefore,
\[
 {u}_\mathsf{t}E_{r-s}^\lambda = {u}_\mathsf{t}
  + \sum_{\{\sts \mid \sts(r-s) \vartriangleright \lambda\} } 
 r_\sts u_\sts \mod A_{r}^{\rhd \nu} \otimes_R \FF. 
\]
Therefore $M_{r,s}^\mathbb{F}(\lambda,\nu)/U_{r,s}^\mathbb{F}(\lambda,\nu)$ is isomorphic as an $A^\FF_{r-s}$-module to 
a direct sum of $\sharp\Std_s(\nu\setminus\lambda)$ copies of $\Delta_{r-s}^\mathbb{F}(\lambda)$.  The result follows by induction.  
\end{proof}

By \cref{LEMMAONE,lifting lemma}, we immediately deduce that there exists a $A_{r-s}$-module structure on $M_{s,r}^R(\lambda,\nu) / U_{s,r}^R(\lambda,\nu)$.   This is the first step towards providing a 
 cell-filtration of  $M_{s,r}^R(\lambda,\nu) / U_{s,r}^R(\lambda,\nu)$ as an $A_{r-s}$-module.  

\begin{cor}\label{corollary on restriction}
Let $\nu\in\widehat{A}_{r}$ and $\lambda\in \widehat{A}_{r-s}$ be such that $\Std_{s}(\nu\setminus\lambda)\ne \emptyset$. 
Then
$U_{s,r}^R(\lambda,\nu)\subseteq M_{s,r}^R(\lambda,\nu)\subseteq \Delta_{r}^R(\nu)$ as $A_{r-s} $-modules.   
\end{cor}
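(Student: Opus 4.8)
The plan is to obtain the statement over the integral domain $R$ directly from its field-of-fractions counterpart, \cref{LEMMAONE}, by invoking the lifting lemma \cref{lifting lemma} once for each of the two containments.

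First I would isolate the inputs. By \cref{basisiss} and the remark following it, $\Delta_r^R(\nu)$ is a free $R$-module on the Murphy basis $\{u_\stt + A_r^{\rhd\nu}\mid \stt\in\Std_r(\nu)\}$; and by construction $M_{s,r}^R(\lambda,\nu)$ and $U_{s,r}^R(\lambda,\nu)$ are the $R$-spans of the sub-bases cut out by the conditions $\stt(r-s)\unrhd\lambda$ and $\stt(r-s)\rhd\lambda$ respectively. In particular each of $M_{s,r}^R(\lambda,\nu)$ and $U_{s,r}^R(\lambda,\nu)$ is an $R$-submodule of $\Delta_r^R(\nu)$ spanned by a subset of the ambient basis, and $U_{s,r}^R(\lambda,\nu)\subseteq M_{s,r}^R(\lambda,\nu)$ as $R$-modules.

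Now I would apply \cref{lifting lemma} with $B^R=A_r$, with $A^R=A_{r-s}$ and $\iota_1\colon A_{r-s}\hookrightarrow A_r$ the tower embedding, with $M^R=\Delta_r^R(\nu)$ (so that $M^\FF=\Delta_r^\FF(\nu)$), and with $N^R$ taken to be $M_{s,r}^R(\lambda,\nu)$ in the first instance and $U_{s,r}^R(\lambda,\nu)$ in the second. The sole hypothesis of \cref{lifting lemma} to be verified is that $N^\FF=N^R\otimes_R\FF$ is an $A_{r-s}^\FF$-submodule of $\Delta_r^\FF(\nu)$, and this is precisely what \cref{LEMMAONE} provides. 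The lemma then endows each of $M_{s,r}^R(\lambda,\nu)$ and $U_{s,r}^R(\lambda,\nu)$ with an $A_{r-s}$-module structure whose action is $(n,a)\mapsto\iota_2(n)\iota_1(a)$, i.e. the restriction along $\iota_1$ of the ambient right $A_r$-action on $\Delta_r^R(\nu)$. Hence both are $A_{r-s}$-submodules of $\Delta_r^R(\nu)$; and because this structure is defined by the same formula on the smaller and the larger module, the $R$-linear inclusion $U_{s,r}^R(\lambda,\nu)\subseteq M_{s,r}^R(\lambda,\nu)$ is automatically $A_{r-s}$-equivariant, giving the asserted chain of $A_{r-s}$-submodules.

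I do not anticipate a genuine obstacle: the corollary is a formal consequence of \cref{LEMMAONE} and \cref{lifting lemma}. The only points needing care are that $M_{s,r}^R$ and $U_{s,r}^R$ are genuinely spanned by sub-bases of the Murphy basis of $\Delta_r^R(\nu)$ — so that \cref{lifting lemma} applies verbatim rather than to an arbitrary $R$-submodule — that the identifications of \cref{dsfakjhasdfjhksadf} are used consistently throughout, and that one records explicitly that the $A_{r-s}$-structure furnished by \cref{lifting lemma} on the smaller module is the restriction of the one on the larger, so that the result is a chain of inclusions of $A_{r-s}$-modules and not merely a chain of abstract modules linked by $R$-linear maps.
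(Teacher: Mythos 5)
Your argument is exactly the paper's: the corollary is deduced by applying \cref{lifting lemma} (with $B^R=A_r$, $A^R=A_{r-s}$, $M^R=\Delta_r^R(\nu)$ and $N^R$ each of the two spans of sub-bases of the Murphy basis) once the hypothesis that $N^\FF$ is an $A_{r-s}^\FF$-submodule is supplied by \cref{LEMMAONE}. Your write-up just makes explicit the bookkeeping the paper leaves implicit, so there is nothing to correct.
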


 The following technical  lemma will be useful in the proof of \cref{restriction by dominance}, below.

\begin{lem}\label{first rest}
Let $0\le s \leq  r$, $\lambda\in \widehat{A}_{r-s}$ and $\nu\in\widehat{A}_r$. Let $\stt\in\Std_r(\nu)$, where $\stt(r-s)=\lambda$. Let $a\in A_{r-s}$ and suppose that 
\begin{align}\label{adsjklsljkfadsjklfdsjklsdafkjlasdfk}
u_{\mathsf{t}}a\equiv \sum_{\mathsf{s}\in\Std_{r-s}(\lambda)} r(a;\mathsf{t}_{[0,{r-s}]},\mathsf{s})u_{\mathsf{t}_{[{r-s},r]}}u_\mathsf{s}+\sum_{\substack{\mathsf{z}\in\Std_r(\nu)\\ \mathsf{z}_{[r-s,r]}\rhd\mathsf{t}_{[{r-s},r]} }} r_\mathsf{z}u_\mathsf{z}\mod A_{r}^{\rhd\nu},
\end{align}
where 
the scalars $r(a;\mathsf{t}_{[0,{r-s}]},\mathsf{s}) \in R$ are the structure constants for   $\Delta_{r-s}^R(\lambda) $ determined  by 
\begin{align}\label{a action}
u_{\mathsf{t}_{[0,r-s]}}a=\sum_{\mathsf{s}\in\Std_{r-s}(\lambda)} r(a;\mathsf{t}_{[0,r-s]},\mathsf{s})u_\mathsf{s}+\sum_{\substack{\sigma \rhd\lambda\\ \mathsf{p,q}\in \Std_{r-s}(\sigma)}}r(a;\mathsf{t}_{[0,{r-s}]},\mathsf{p,q})d_\mathsf{p}^*u_\mathsf{q}.
\end{align}
 Then the rightmost sum in equation~\eqref{adsjklsljkfadsjklfdsjklsdafkjlasdfk} satisfies the relation
\[
  \sum_{\substack{\mathsf{z}\in\Std_r(\nu)\\ \mathsf{z}_{[{r-s},r]}\rhd\mathsf{t}_{[{r-s},r]} }} r_\mathsf{z}u_\mathsf{z} \equiv
\sum_{\substack{\sigma \rhd\lambda\\ \mathsf{p,q}\in \Std_{r-s}(\sigma)}}r(a;\mathsf{t}_{[0,{r-s}]},\mathsf{p,q})u_{\mathsf{t}_{[{r-s},r]}}d_\mathsf{p}^*u_\mathsf{q}\mod A_{r}^{\rhd\nu}.
\]
\end{lem}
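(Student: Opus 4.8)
The idea is to decompose the action $u_{\mathsf t}a$ by splitting off the contribution coming from the factor $u_{\mathsf t_{[0,r-s]}}$ acting inside $\Delta_{r-s}^R(\lambda)$, and then argue that every correction term lives in a dominance-controlled ideal. First I would use the factorisation $u_{\mathsf t}=u_{\mathsf t_{[r-s,r]}}u_{\mathsf t_{[0,r-s]}}$ from \cref{basisiss}, together with the fact that the branching coefficients depend only on the vertices, to write
\begin{align*}
u_{\mathsf t}a=u_{\mathsf t_{[r-s,r]}}\bigl(u_{\mathsf t_{[0,r-s]}}a\bigr),
\end{align*}
and then substitute the expansion \eqref{a action} for $u_{\mathsf t_{[0,r-s]}}a$ inside $A_{r-s}$. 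This yields exactly the first sum $\sum_{\mathsf s}r(a;\mathsf t_{[0,r-s]},\mathsf s)u_{\mathsf t_{[r-s,r]}}u_{\mathsf s}$ appearing in \eqref{adsjklsljkfadsjklfdsjklsdafkjlasdfk}, plus the term $\sum_{\sigma\rhd\lambda}\sum_{\mathsf p,\mathsf q}r(a;\mathsf t_{[0,r-s]},\mathsf p,\mathsf q)\,u_{\mathsf t_{[r-s,r]}}d_{\mathsf p}^*u_{\mathsf q}$, working modulo $A_{r-s}^{\rhd\lambda}$ and hence modulo $A_r^{\rhd\nu}$ after left multiplication. Comparing this with \eqref{adsjklsljkfadsjklfdsjklsdafkjlasdfk} and cancelling the common leading sum, we obtain
\begin{align*}
\sum_{\substack{\mathsf z\in\Std_r(\nu)\\ \mathsf z_{[r-s,r]}\rhd\mathsf t_{[r-s,r]}}}r_{\mathsf z}u_{\mathsf z}\equiv\sum_{\substack{\sigma\rhd\lambda\\ \mathsf p,\mathsf q\in\Std_{r-s}(\sigma)}}r(a;\mathsf t_{[0,r-s]},\mathsf p,\mathsf q)\,u_{\mathsf t_{[r-s,r]}}d_{\mathsf p}^*u_{\mathsf q}\mod A_r^{\rhd\nu},
\end{align*}
which is precisely the claimed identity, provided the right-hand side is well-defined as written and the two sides really are equal rather than merely congruent to a common third expression.

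The substantive point, and the main obstacle, is therefore showing that the terms $u_{\mathsf t_{[r-s,r]}}d_{\mathsf p}^*u_{\mathsf q}$ with $\sigma\rhd\lambda$ can be re-expanded in the Murphy basis of $A_r$ using only basis elements $u_{\mathsf z}$ with $\mathsf z_{[r-s,r]}\rhd\mathsf t_{[r-s,r]}$ (modulo $A_r^{\rhd\nu}$), so that the two descriptions of the correction genuinely match term by term. Here I would invoke \cref{LEMMAONE} (equivalently \cref{corollary on restriction}): the element $d_{\mathsf p}^*u_{\mathsf q}$ lies in $A_{r-s}^{\rhd\lambda}$, and by the structure of $\Delta_r^R(\nu)$ as a restricted $A_{r-s}$-module, left multiplication by $u_{\mathsf t_{[r-s,r]}}$ carries the ``$\rhd\lambda$ part'' into the span $U_{s,r}^R(\lambda,\nu)$ of those $u_{\mathsf z}$ with $\mathsf z(r-s)\rhd\lambda$; combined with the fact that the branching is order-preserving, dominance at level $r-s$ propagates to dominance of the truncated paths $\mathsf z_{[r-s,r]}$ over $\mathsf t_{[r-s,r]}$. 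One then checks that the coefficients extracted this way are forced to be the scalars $r(a;\mathsf t_{[0,r-s]},\mathsf p,\mathsf q)$ by uniqueness of expansion in the Murphy basis.

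The remaining steps are bookkeeping: one verifies that the map ``$b\mapsto u_{\mathsf t_{[r-s,r]}}b\bmod A_r^{\rhd\nu}$'' from $A_{r-s}$ to $\Delta_r^R(\nu)$ is $A_{r-s}$-linear on the relevant submodules, so that applying it to the identity \eqref{a action} (which holds in $A_{r-s}$) produces a valid identity in $\Delta_r^R(\nu)$; and one notes that the passage from $R$ to $\FF$ is harmless since all the structure constants lie in $R$ and the Murphy basis is an $R$-basis, so no division is needed. I expect the whole argument to be short once \cref{LEMMAONE} is in hand — the genuine content is the dominance-preservation statement, and everything else is comparing two expansions of the same element of $\Delta_r^R(\nu)$ and using linear independence of the Murphy basis.
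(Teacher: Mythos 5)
Your first paragraph is exactly the paper's proof: multiply \eqref{a action} on the left by $u_{\stt_{[r-s,r]}}$, observe (via the factorisation in \cref{basisiss} and the fact that $\sts(r-s)=\stt(r-s)$ for $\sts\in\Std_{r-s}(\lambda)$) that each $u_{\stt_{[r-s,r]}}u_{\sts}$ is the genuine Murphy basis element of $A_r$ labelled by the concatenated path, and then cancel this common leading sum against the hypothesis \eqref{adsjklsljkfadsjklfdsjklsdafkjlasdfk}. That already finishes the argument: both expressions are congruent to $u_{\stt}a$ modulo $A_r^{\rhd\nu}$, the leading sums are literally the same element of $A_r$, and the asserted conclusion is only a congruence modulo $A_r^{\rhd\nu}$, so transitivity does the rest. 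The ``main obstacle'' you then set up --- re-expanding $u_{\stt_{[r-s,r]}}d_{\mathsf p}^{*}u_{\mathsf q}$ in the Murphy basis and matching it term by term against elements $u_{\mathsf z}$ with $\mathsf z_{[r-s,r]}\rhd\stt_{[r-s,r]}$ --- is not part of this lemma: the dominance control on the remainder is an \emph{assumption} here (it is what \eqref{adsjklsljkfadsjklfdsjklsdafkjlasdfk} supplies, and it is established inductively in \cref{restriction by dominance} using \cref{corollary on restriction}), not something to be re-derived. One small caution in your write-up: the phrase ``working modulo $A_{r-s}^{\rhd\lambda}$ and hence modulo $A_r^{\rhd\nu}$ after left multiplication'' should not be read as claiming $u_{\stt_{[r-s,r]}}A_{r-s}^{\rhd\lambda}\subseteq A_r^{\rhd\nu}$, which is false in general; since you keep the correction term explicit in your display this does not affect correctness, but it is worth being precise there.
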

\begin{proof}
Multiplying both sides of equation~\eqref{a action} by $u_{\mathsf{t}_{[{r-s},r]}}$ on the left gives:
\[
u_{\mathsf{t}}a=u_{\mathsf{t}_{[{r-s},r]}}u_{\mathsf{t}_{[0,{r-s}]}}a
=\;\;\;
\sum_{
\mathclap{\mathsf{s}\in\Std_{r-s}(\lambda)}
}\;\;
 r(a;\mathsf{t}_{[0,{r-s}]},\mathsf{s})u_{\mathsf{t}_{[{r-s},r]}}u_\mathsf{s} 
 +\;\;\;
 \sum_
 {\mathclap{\begin{subarray}c \sigma \rhd\lambda\\ \mathsf{p,q}\in \Std_{r-s}(\sigma)
 \end{subarray}}
 }
\;\;
 r(a;\mathsf{t}_{[0,{r-s}]},\mathsf{p,q})u_{\mathsf{t}_{[{r-s},r]}}d_\mathsf{p}^*u_\mathsf{q}.
\]
Since  $\stt (r-s)=  \sts(r-s)$    for $\mathsf{s}\in \Std_{r-s}(\lambda)$, we have that 
 $u_{\mathsf{t}_{[r-s,r]}}u_\mathsf{s} $    is actually the element of the cellular basis  of $A_r$ labelled  by the concatenation of the tableaux $\mathsf{t}_{[r-s,r]} $ and $ \mathsf{s}$. The result follows. 
\end{proof}

Finally, we conclude this section by demonstrating that the $A_{r-s}$-module 
 $M_{s,r}^R(\lambda,\nu) / U_{s,r}^R(\lambda,\nu)$ has a filtration in which every factor is isomorphic to $\Delta^R_{r-s}(\lambda)$.  
 This filtration is given by the dominance order on the skew tableaux of shape $\nu\setminus\lambda$.

\begin{prop}\label{restriction by dominance}
Let  $\lambda\in \widehat{A}_{r-s}$ and $\nu\in\widehat{A}_r$. Given  $\mathsf{t}\in\Std_r(\nu)$
and  $a\in A_{r-s}$, we have that  \begin{align*}
u_{\mathsf{t}}a\equiv \sum_{\mathsf{s}\in\Std_{r-s}(\lambda)} r(a;\mathsf{t}_{[0,r-s]},\mathsf{s})u_{\mathsf{t}_{[r-s,r]}}u_\mathsf{s}+
\;\;\;\;\sum_{
 \mathclap{
\begin{subarray}c 
\mathsf{z}\in\Std_r(\nu)\\ 
\mathsf{z}_{[r-s,r]} \rhd \stt_{[r-s,r]} 
\end{subarray}
 }
} \;\;\;
r_\mathsf{z}u_\mathsf{z}\mod A_{r}^{\rhd\nu}.
\end{align*}
where $r_{\mathsf z},  r(a;\mathsf{t}_{[0,r-s]},\mathsf{s})\in R$.   Moreover, the  coefficients $ r(a;\mathsf{t}_{[0,r-s]},\mathsf{s})$ are the structure constants   for $\Delta_{r-s}^R(\lambda)$  determined by \[
u_{\mathsf{t}_{[0,{r-s}]}}a\equiv\sum_{\mathsf{s}\in\Std_{r-s}(\lambda)} r(a;\mathsf{t}_{[0,{r-s}]},\mathsf{s})u_\mathsf{s} \mod A_{{r-s}}^{\rhd\lambda}.
\]
\end{prop}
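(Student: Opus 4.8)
\emph{Overall strategy.} The plan is to pin the error terms down by two independent constraints. The first is a ``coarse'' one coming from the submodule structure established in \cref{corollary on restriction}, and it controls \emph{all} of the levels $r-s,\dots,r$ in the dominance order; the second is a ``strictness'' one coming from the central idempotents over $\FF$, and it shows that among the levels at which equality is ruled out is $k=r-s$. (It is the combination of these two that lets one replace the reverse lexicographic order by the dominance order, as advertised in the remark after \cref{orders on paths}.) We may assume $\mathsf{t}(r-s)=\lambda$, since otherwise the right-hand side of the displayed congruence is not even defined. Finally, since $u_\mathsf{t}a+A_r^{\rhd\nu}\in\Delta^R_r(\nu)$ and $\{u_\mathsf{z}+A_r^{\rhd\nu}\mid\mathsf{z}\in\Std_r(\nu)\}$ is simultaneously an $R$-basis of $\Delta^R_r(\nu)$ and an $\FF$-basis of $\Delta^\FF_r(\nu)$, by uniqueness of expansions (cf.\ \cref{lifting lemma}) it is enough to prove the congruence after extending scalars to $\FF$; the coefficients then automatically lie in $R$.

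\emph{Step 1: the starting identity.} Write $\mathsf{w}=\mathsf{t}_{[r-s,r]}$. The Murphy factorisation $u_\mathsf{t}=u_\mathsf{w}u_{\mathsf{t}_{[0,r-s]}}$ together with \eqref{a action} gives, exactly as in the proof of \cref{first rest}, the honest identity in $A_r$
\[
u_\mathsf{t}a=\sum_{\mathsf{s}\in\Std_{r-s}(\lambda)}r(a;\mathsf{t}_{[0,r-s]},\mathsf{s})\,u_\mathsf{w}u_\mathsf{s}\;+\;u_\mathsf{w}z',\qquad
z':=\sum_{\substack{\sigma\rhd\lambda\\ \mathsf{p},\mathsf{q}\in\Std_{r-s}(\sigma)}}r(a;\mathsf{t}_{[0,r-s]},\mathsf{p},\mathsf{q})\,d_\mathsf{p}^{\ast} u_\mathsf{q}\in A_{r-s}^{\rhd\lambda}.
\]
Each $u_\mathsf{w}u_\mathsf{s}$ is the Murphy basis element of $A_r$ labelled by the concatenation of $\mathsf{s}$ and $\mathsf{w}$, so these are precisely the $u_\mathsf{z}$ with $\mathsf{z}_{[r-s,r]}=\mathsf{w}$, and the first sum already has the desired shape with the prescribed diagonal coefficients. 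Everything therefore reduces to the claim that $u_\mathsf{w}z'\equiv\sum_{\mathsf{z}:\,\mathsf{z}_{[r-s,r]}\rhd\mathsf{w}}r_\mathsf{z}u_\mathsf{z}\pmod{A_r^{\rhd\nu}}$.

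\emph{Step 2: the two constraints.} For the first, I would apply \cref{corollary on restriction} at each level $k$ with $r-s\le k\le r$: the space $M^R_{r-k,r}(\mathsf{t}(k),\nu)$ is an $A_k$-submodule of $\Delta^R_r(\nu)$ containing $u_\mathsf{t}$ and all the $u_\mathsf{w}u_\mathsf{s}$ (every tableau involved passes through $\mathsf{t}(k)$ at level $k$), and $a\in A_{r-s}\subseteq A_k$, so $u_\mathsf{w}z'=u_\mathsf{t}a-\sum_\mathsf{s}r(a;\dots)u_\mathsf{w}u_\mathsf{s}$ lies in it. Intersecting over all such $k$ shows that only $u_\mathsf{z}$ with $\mathsf{z}(k)\unrhd\mathsf{t}(k)$ for all $r-s\le k\le r$, i.e.\ with $\mathsf{z}_{[r-s,r]}\unrhd\mathsf{w}$, occur in $u_\mathsf{w}z'$ modulo $A_r^{\rhd\nu}$. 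For the second, I would pass to $\FF$ and use the block idempotents $E^\mu_{r-s}$ of $A^\FF_{r-s}$ from the proof of \cref{LEMMAONE}. Since $z'$ is a combination of cellular basis elements of $A_{r-s}$ labelled by shapes strictly above $\lambda$, and $A^{\rhd\lambda}_{r-s}\otimes_R\FF$ is precisely the sum of the blocks indexed by the shapes $\mu$ with $\mu\rhd\lambda$, we get $z'E^\lambda_{r-s}=0$. Multiplying the identity of Step 1 on the right by $E^\lambda_{r-s}$ and using the centrality of $E^\lambda_{r-s}$ in $A^\FF_{r-s}$, the fact (shown \emph{in the proof} of \cref{LEMMAONE}, not merely its statement) that $u_\mathsf{v}E^\lambda_{r-s}\equiv u_\mathsf{v}\pmod{U^\FF_{s,r}(\lambda,\nu)}$ whenever $\mathsf{v}(r-s)=\lambda$, and that $U^\FF_{s,r}(\lambda,\nu)$ is an $A^\FF_{r-s}$-submodule, one obtains $u_\mathsf{t}a\equiv\sum_\mathsf{s}r(a;\dots)u_\mathsf{w}u_\mathsf{s}\pmod{U^\FF_{s,r}(\lambda,\nu)}$; that is, $u_\mathsf{w}z'\in U^\FF_{s,r}(\lambda,\nu)$, so only $u_\mathsf{z}$ with $\mathsf{z}(r-s)\rhd\lambda$ occur in $u_\mathsf{w}z'$.

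\emph{Step 3: conclusion, and the main obstacle.} Combining the two constraints, every $u_\mathsf{z}$ occurring in $u_\mathsf{w}z'$ modulo $A^{\rhd\nu}_r$ satisfies $\mathsf{z}(k)\unrhd\mathsf{t}(k)$ for all $r-s\le k\le r$ with strict inequality at $k=r-s$, hence $\mathsf{z}_{[r-s,r]}\rhd\mathsf{w}=\mathsf{t}_{[r-s,r]}$; substituting into the identity of Step 1 proves the congruence over $\FF$, and therefore over $R$, and shows that the full $\mathsf{z}_{[r-s,r]}=\mathsf{w}$ contribution comes from the first sum, so the diagonal coefficients are indeed the structure constants of $\Delta^R_{r-s}(\lambda)$ determined by \eqref{a action}. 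I expect the second constraint to be the delicate point: one must verify both that $z'E^\lambda_{r-s}=0$ (which rests on the block decomposition of the semisimple cellular algebra $A^\FF_{r-s}$) and that right multiplication by $E^\lambda_{r-s}$ perturbs the relevant elements only inside $U^\FF_{s,r}(\lambda,\nu)$ (which is why one must reuse the computations, and not just the statement, of \cref{LEMMAONE}). Neither constraint suffices alone: the first does not exclude the ``diagonal'' tableaux with $\mathsf{z}_{[r-s,r]}=\mathsf{w}$, and the second does not control the intermediate levels $r-s<k<r$.
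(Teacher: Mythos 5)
Your proof is correct, but it takes a genuinely different route from the paper's. The paper argues by induction on $s$: the base case $s=1$ is quoted from \cite[Proposition 2.18]{MR2774622}, and the inductive step peels off one level at a time, re-invoking the $s=1$ case for $A_{r-s-1}\subseteq A_{r-s}$ acting on $\Delta^R_{r-s}(\lambda)$ (the expansion \eqref{third expa}) and using \cref{first rest} to push the $A_{r-s}^{\rhd\lambda}$-terms into the error sum, which is then absorbed via \cref{corollary on restriction} applied at the single new level. Your argument is direct and uniform in $s$: you isolate the error term $u_{\mathsf{t}_{[r-s,r]}}z'$ with $z'\in A_{r-s}^{\rhd\lambda}$ once and for all from \eqref{a action}, and then constrain it by (i) membership in $M^R_{r-k,r}(\mathsf{t}(k),\nu)$ for \emph{every} intermediate level $k$ simultaneously, and (ii) the idempotent computation $z'E^{\lambda}_{r-s}=0$ together with $u_{\mathsf{v}}E^{\lambda}_{r-s}\equiv u_{\mathsf{v}} \bmod U^{\FF}_{s,r}(\lambda,\nu)$, both of which are indeed available from the proof (not merely the statement) of \cref{LEMMAONE}, with descent from $\FF$ to $R$ handled as in \cref{lifting lemma}. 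What your approach buys: it dispenses with the external input \cite[Proposition 2.18]{MR2774622} entirely (your argument subsumes the $s=1$ case rather than bootstrapping from it), and it proves a slightly sharper statement, namely that the strict inequality can be located at level $r-s$ (i.e.\ $\mathsf{z}(r-s)\rhd\lambda$) rather than merely at some level of $\mathsf{z}_{[r-s,r]}$. What the paper's induction buys is that it stays closer to the integral form and identifies the diagonal structure constants level by level, which is the formulation reused verbatim in \cref{proposition: triangularity property of additive JM elements}. The only point needing explicit justification in your write-up is the identification of $A^{\rhd\lambda}_{r-s}\otimes_R\FF$ with the sum of blocks $\bigoplus_{\mu\rhd\lambda}E^{\mu}_{r-s}A^{\FF}_{r-s}$, which underlies $z'E^{\lambda}_{r-s}=0$; this is exactly the fact the paper itself invokes in the proof of \cref{LEMMAONE}, so it is legitimate, but it should be stated rather than left implicit.
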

\begin{proof}
The statement  holds for  $s=1$ and $r\in \mathbb{Z}_{\geq 0}$ by \cite[Proposition 2.18]{MR2774622}.   Assume by induction that the statement holds for $1 \leq s \leq r$, we shall show that the result therefore holds for $s+1$. 
%
 Assume that $a'\in A_{{r-s}-1}$ and $\stt(r-s)=\lambda$.
  By  the $s=1$ case   we have that
\begin{align}\label{third expa}
\begin{split}
u_{\mathsf{t}_{[0,{r-s}]}}a'&=
\;\;\;\;\;\; \sum_{
\mathclap{ \begin{subarray}c  \mathsf{s}\in\Std_{r-s}(\lambda)\\ \mathsf{s}({r-s}-1)=\mathsf{t}({r-s}-1)
 \end{subarray} }
 } \;\;\;
 r(a';\mathsf{t}_{[0,{r-s}]},\mathsf{s})u_\mathsf{s}+ 
\;\;\;\;\;\;\sum_{
\mathclap{\begin{subarray}c
 \mathsf{s}\in\Std_{r-s}(\lambda)\\ \mathsf{s}({r-s}-1)\rhd\mathsf{t}({r-s}-1)
 \end{subarray}}
 }  \;\;\;
r(a';\mathsf{t}_{[0,{r-s}]},\mathsf{s})u_\mathsf{s}  
\\ 
\;\;\; &\qquad\qquad+\sum_{
\mathclap{ \begin{subarray}c
\sigma\rhd\lambda\\ \mathsf{p,q}\in\Std_{r-s}(\sigma) 
\end{subarray}}
}\;\;\;
r(a';\mathsf{t}_{[0,{r-s}]},\mathsf{p,q})d_\mathsf{p}^*u_\mathsf{q}.\end{split}
\end{align}
where the first two (of the three) terms in \cref{third expa} 
 can be grouped together to be the first term in \cref{a action}; the 
 final term in \cref{third expa} is equal to the final term in \cref{a action}.  
As $a' \in A_{r-s-1   }\subset  A_{r-s}$, our inductive assumption implies that 
$u_\stt a'$ is of the form required in \cref{adsjklsljkfadsjklfdsjklsdafkjlasdfk}.  
 Therefore we can multiply both sides of equation~\eqref{third expa} by $u_{\mathsf{t}_{[r-s,r]}}$ on the left   
and apply \cref{first rest} in order to obtain 
\begin{align}\begin{split}
u_\mathsf{t}a'&\equiv
 \;\;\;\;\;\;\;\;\;\sum_{
\mathclap{\begin{subarray}c \mathsf{s}\in\Std_{r-s}(\lambda)\\ \mathsf{s}({r-s}-1)=
\mathsf{t}({r-s}-1) 
\end{subarray}}
}\;\;\;\;\;
 r(a';\mathsf{t}_{[0,{r-s}]},\mathsf{s})u_{\mathsf{t}_{[{r-s},r]}}u_\mathsf{s}
 + 
 \;\;\;\;\;\;\;\;\;
\sum_{
\mathclap{\begin{subarray}c \mathsf{s}\in\Std_{r-s}(\lambda)\\ \mathsf{s}({r-s}-1)\rhd\mathsf{t}({r-s}-1)
\end{subarray}}
} \;\;\;\;\;
r(a';\mathsf{t}_{[0,{r-s}]},\mathsf{s})u_{\mathsf{t}_{[{r-s},r]}}u_\mathsf{s}\\
&\qquad\qquad
+ \;\;\;\;\;\;  \sum_{\mathclap{
\begin{subarray}c \mathsf{z}\in\Std_{r }(\nu)\\
 \mathsf{z}_{[{r-s},r]}\rhd\mathsf{t}_{[{r-s},r]} 
\end{subarray}
}
 } \;\;\;\;\;
  r_\mathsf{z}u_\mathsf{z}\mod A_{r}^{\rhd\nu}.
\end{split}
\label{fourth expa}
\end{align}
By \hyperref[corollary on restriction]{Corollary~\ref*{corollary on restriction}} and our assumption that $a \in A_{r-s-1}$, the final sum on the right hand side of equation~\eqref{fourth expa} 
is over $\mathsf{z}\in\Std_{r-s}(\nu)$ such that $\mathsf{z}({r-s}-1)\unrhd\mathsf{t}({r-s}-1)$ and can therefore be expressed as:
\[
\sum_{\substack{\mathsf{z}\in\Std_{r-s}(\nu)\\ \mathsf{z}_{[{r-s}-1,r]}\rhd\mathsf{t}_{[{r-s}-1,r]} }} r_\mathsf{z}u_\mathsf{z}  
\]
modulo $ A_{r}^{\rhd\nu}$.  
Rewritten as above, the final sum on the right hand side of equation~\eqref{fourth expa} can be subsumed into the second  sum on the right-hand side of equation~\eqref{fourth expa} thereby giving \begin{align}
u_\mathsf{t}a'&\equiv
\sum_{\substack{\mathsf{s}\in\Std_{r-s}(\lambda)\\ \mathsf{s}({r-s}-1)=\mathsf{t}({r-s}-1)}} r(a';\mathsf{t}_{[0,{r-s}]},\mathsf{s})u_{\mathsf{t}_{[{r-s},r]}}u_\mathsf{s}+\sum_{\substack{\mathsf{z}\in\Std_{r-s}(\nu)\\ \mathsf{z}_{[{r-s}-1,r]}\rhd\mathsf{t}_{[{r-s}-1,r]} }} r'_\mathsf{z}u_\mathsf{z}\mod A_{r}^{\rhd\nu}.
\label{fifth expa}
\end{align}
Now, by  the inductive hypothesis  the scalars $r(a';\mathsf{t}_{[0,{r-s}]},\mathsf{s})$, for $\mathsf{s}\in\Std_{r-s}(\lambda)$ such that $\mathsf{s}({r-s}-1)=\mathsf{t}({r-s}-1)$, in equation~\eqref{fifth expa}
 are     the
structure constants for the cell-modules of the algebra 
 $A_{{r-s}-1}$; that is the coefficients $r(a';\mathsf{t}_{[0,{r-s}]},\mathsf{s})$ are determined by the following equation 
\[
u_{\mathsf{t}_{[0,{r-s}-1]}}a'\equiv
\sum_{\substack{\mathsf{s}\in\Std_{r-s}(\lambda)\\ \mathsf{s}({r-s}-1)=\mathsf{t}({r-s}-1)}} r(a';\mathsf{t}_{[0,{r-s}]},\mathsf{s})u_{\mathsf{s}_{[{r-s}-1,0]}}\mod A_{{r-s}-1}^{\rhd\mathsf{t}({r-s}-1)}.
\]
This completes the proof of the proposition. 
\end{proof}

\section{Jucys--Murphy elements for diagram algebras}  \label{subsection: preliminaries: JM}
\label{section: JM elements in coherent towers}

We recall the definition and first properties of families of Jucys--Murphy elements for diagram algebras. The action of Jucys--Murphy elements on cell modules for diagram algebras was first  considered systematically in  Goodman--Graber \cite{MR2794027,MR2774622}; motivated by work of   Mathas \cite{MR2414949}.   
 In this section, we use \hyperref[restriction by dominance]{Proposition~\ref*{restriction by dominance}} to  strengthen the  results of 
 \cite{MR2794027,MR2774622} by replacing the reverse lexicographic order on skew tableaux with 
 the dominance order on skew tableaux.

   \begin{defn}  
   Let   $(A_k)_{k \ge 0}$ be a strongly coherent tower of cellular algebras over an integral domain, $R$. 
 We say that a family of  elements $\{L_k:  k \ge 1\}$,  is an {\sf additive family} of {\sf Jucys--Murphy  elements} if 
    the following conditions hold.
 \begin{enumerate}
  \item    For all $k \ge 1$, $L_k \in A_k$, $L_k$ is invariant under the involution of $A_k$,   and  
  $L_k$ commutes with $A_{k-1}$.  In particular,   $L_i L_j = L_j L_i$ for all $1\leq i\leq j \leq k$.
  \item For all $k \ge 1$ and   $\la \in\widehat{A}_k$,   there exists  $d(\la) \in R$  such that    $ L_1 +  \cdots + L_k$   acts as the scalar   $d(\la)$  on the cell module $\Delta^R_k(\la)$.
  For $\lambda \in \widehat{A}_{0}$, we let $d(\lambda)=0$.
    \end{enumerate}
    \end{defn}
  
\begin{eg}
 The  group algebras of symmetric groups, Temperley--Lieb, Brauer, walled Brauer,   and partition algebras
 all possess additive families of Jucys--Murphy elements.
 \end{eg}

   \begin{defn}  
      Let   $(A_k)_{k \ge 0}$ be a strongly coherent tower of cellular algebras over an integral domain, $R$. 
 We say that a family of  elements $\{L_k:  k \ge 1\}$,  is a  {\sf multiplicative  family} of {\sf Jucys--Murphy  elements}
   if  the following conditions hold.
 \begin{enumerate}
  \item    For all $k \ge 1$,  
    $L_k$ is an invertible element of    $A_k$,
  $L_k$ is invariant under the involution $\ast$, 
   and  
  $L_k$ commutes with $A_{k-1}$.  In particular,   $L_i L_j = L_j L_i$ for all $1\leq i\leq j \leq k$.
  \item For all $k \ge 1$ and   $\la \in\widehat{A}_k$,   there exists  $d(\la) \in R$  such that    $ L_1   \cdots  L_k$   acts as the scalar   $d(\la)$  on the cell module $\Delta^R_k(\la)$.
  For $\lambda \in \widehat{A}_{0}$, we let $d(\lambda)=1$.
    \end{enumerate}
    \end{defn}
  
 \begin{eg}

  The Birman--Murakami--Wenzl algebra possess a multiplicative family of Jucys--Murphy elements.   
\end{eg}

In \cite[Proposition 3.7]{MR2774622} and \cite[Proposition  3.6]{MR2774622}, Goodman and Graber show that Jucys--Murphy elements act upper triangularly with respect to the reverse lexicographic  order on skew tableaux. The main ingredient in their proof of  is an analogue  of \hyperref[restriction by dominance]{Proposition~\ref*{restriction by dominance}}, obtained by replacing the dominance order on skew tableaux with the weaker reverse lexicographic order on skew tableaux. Therefore \hyperref[restriction by dominance]{Proposition~\ref*{restriction by dominance}} allows us to strengthen~\cite[Proposition 3.6]{MR2774622} and \cite[Proposition  3.7]{MR2774622} respectively by replacing the reverse lexicographic order on skew tableaux   with the dominance order on skew tableaux (\hyperref[orders on paths]{Definition~\ref{orders on paths}}). 
Therefore the subsequence applications of~\cite{MR2774622} (see for example,  \cite{MR3092697,MR2369064,MR2542221}) 
 can be strengthened by replacing the reverse lexicographic order of~\cite{MR3092697} (and the miscellaneous orders used in ~\cite{MR2369064,MR2542221}) with the dominance order on skew tableaux.

\begin{prop} \label{proposition: triangularity property of  additive JM elements}
Suppose that $\{L_k: k \ge 1\}$  is an additive  family of Jucys--Murphy elements  for the  tower $(A_k)_{k \ge 0}$.
\begin{enumerate}
\item
For $k \ge 1$  and $\la \in \widehat{A}_k$, let 
 $d(\la) \in R$  be  such that $L_1 + \cdots + L_k$  acts by the scalar $d(\la)$ on the cell module $\Delta^R_k(\la)$.  Then for all $k \ge 1$,  $\la \in \widehat{A}_k$,   $\stt \in \Std_k(\la)$, and $1 \le j \le k$,  we have
 \begin{equation} \label{equation: triangular action of Lj 2}
 {u}_\stt^\la       L_j  = c_\stt(j) {u}_\stt^\la + \sum_{\sts_{[j-1,k]} \, \rhd \, \stt_{[j-1,k]} } r_\sts  {u}_\sts^\la \mod A_{r}^{\rhd \la},
 \end{equation}
  for some elements $r_\sts \in R$  (depending on $j$ and $\stt$),
 with $c_\stt(j) = \displaystyle{d(\stt(j))}- {d(\stt(j-1))}$. 
 Here, the order $\rhd$ is the dominance order on skew tableaux.  
   \item For each $k \ge 1$,  $ L_1 +\cdots + L_k$ is in the center of $A_k$.  
   \end{enumerate}  
 \end{prop}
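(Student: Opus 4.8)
The plan is to establish part (1) first and then to derive part (2) from generic semisimplicity. For part (1), since $L_j\in A_j$ and $\stt(j)$ is a fixed vertex of $\widehat{A}_j$, I would apply \cref{restriction by dominance} with $r=k$, $r-s=j$, $\nu=\la$ and $a=L_j$. This writes
\[
u_\stt L_j \equiv \sum_{\mfs\in\Std_j(\stt(j))} r(L_j;\stt_{[0,j]},\mfs)\, u_{\stt_{[j,k]}}u_\mfs + \sum_{\substack{\mathsf{z}\in\Std_k(\la)\\ \mathsf{z}_{[j,k]}\rhd\stt_{[j,k]}}} r_{\mathsf{z}}\, u_{\mathsf{z}} \mod A_k^{\rhd\la},
\]
where the scalars $r(L_j;\stt_{[0,j]},\mfs)$ are exactly the structure constants for the action of $L_j$ on the cell module $\Delta_j^R(\stt(j))$, that is, $u_{\stt_{[0,j]}}L_j\equiv \sum_{\mfs}r(L_j;\stt_{[0,j]},\mfs)u_\mfs \mod A_j^{\rhd\stt(j)}$. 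By the factorisation of \cref{basisiss}, $u_{\stt_{[j,k]}}u_\mfs$ is the Murphy basis element indexed by the concatenation of $\mfs$ with the subpath $\stt_{[j,k]}$, and for $\mfs=\stt_{[0,j]}$ it equals $u_\stt$. So the proof reduces to computing those structure constants and then checking the dominance conditions on the resulting error terms.

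To compute the structure constants I would use the telescoping identity $L_j=(L_1+\cdots+L_j)-(L_1+\cdots+L_{j-1})$. The first summand acts on $\Delta_j^R(\stt(j))$ by the scalar $d(\stt(j))$. The second summand lies in $A_{j-1}$, so a second application of \cref{restriction by dominance} — this time with $r=j$, $s=1$, $\nu=\stt(j)$ and $a=L_1+\cdots+L_{j-1}$ — combined with the facts that $L_1+\cdots+L_{j-1}$ acts on $\Delta_{j-1}^R(\stt(j-1))$ by the scalar $d(\stt(j-1))$ and that $u_{\stt(j-1)\to\stt(j)}u_{\stt_{[0,j-1]}}=u_{\stt_{[0,j]}}$, gives
\[
u_{\stt_{[0,j]}}(L_1+\cdots+L_{j-1}) \equiv d(\stt(j-1))\,u_{\stt_{[0,j]}} + \sum_{\substack{\mfs\in\Std_j(\stt(j))\\ \mfs(j-1)\rhd\stt(j-1)}} r_\mfs\, u_\mfs \mod A_j^{\rhd\stt(j)}.
\]
Subtracting shows $r(L_j;\stt_{[0,j]},\mfs)=c_\stt(j)=d(\stt(j))-d(\stt(j-1))$ for $\mfs=\stt_{[0,j]}$, and $r(L_j;\stt_{[0,j]},\mfs)=0$ unless $\mfs=\stt_{[0,j]}$ or $\mfs(j-1)\rhd\stt(j-1)$. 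Substituting back into the first display, the main term is $c_\stt(j)u_\stt$; the structure-constant error terms, after concatenation with $\stt_{[j,k]}$, become paths in $\Std_k(\la)$ that differ from $\stt$ only at level $j-1$, where they strictly dominate $\stt(j-1)$, so they satisfy the condition of the stated identity; and the error terms from the second sum of the first display satisfy $\mathsf{z}_{[j,k]}\rhd\stt_{[j,k]}$.

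For part (2), let $z=L_1+\cdots+L_k$. By hypothesis $z$ acts on every cell module $\Delta_k^R(\la)$, $\la\in\widehat{A}_k$, by the scalar $d(\la)$, hence on each $\Delta_k^\FF(\la)$ by $d(\la)\otimes 1_\FF$. By generic semisimplicity $A_k^\FF$ is split semisimple and $\{\Delta_k^\FF(\la):\la\in\widehat{A}_k\}$ is a complete set of its simple modules; an element of a split semisimple algebra acting as a scalar on every simple module is central, so $z\in Z(A_k^\FF)$. Since $A_k$ is free over $R$ (it carries the Murphy basis), the map $A_k\to A_k\otimes_R\FF=A_k^\FF$ is injective, so an element of $A_k$ that is central in $A_k^\FF$ is central in $A_k$; thus $z\in Z(A_k)$. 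As a consistency check, summing part (1) over $1\le j\le k$ and using $\sum_j c_\stt(j)=d(\la)-d(\stt(0))=d(\la)$ recovers the scalar action of $z$ on $\Delta_k^R(\la)$, though this alone does not give centrality.

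The step I expect to be the main obstacle is the dominance bookkeeping at the end of part (1): one must check that the lifting error terms — which \cref{restriction by dominance} controls only along the subpath $[j,k]$ — together with the structure-constant error terms can all be rewritten in terms of paths $\sts$ satisfying $\sts_{[j-1,k]}\rhd\stt_{[j-1,k]}$, and that no terms outside this range survive modulo $A_k^{\rhd\la}$. This amounts to combining the concatenation identity $u_{\stt_{[j,k]}}u_\mfs=u_\sts$ with the sharp dominance conclusions of \cref{restriction by dominance}, and it is precisely where the replacement of the reverse-lexicographic order by the dominance order in \cref{restriction by dominance} is essential; without it one obtains only the weaker reverse-lexicographic form of the identity, as in \cite{MR2774622}.
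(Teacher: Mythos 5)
Your overall strategy is the right one---it is essentially the argument of \cite[Proposition 3.7]{MR2774622} that the paper itself invokes (the paper's own ``proof'' is just the instruction to rerun that argument with \cref{restriction by dominance} in place of the reverse-lexicographic restriction lemma), and your part (2) (an element acting by a scalar on every simple module of a split semisimple algebra is central, then descend to $A_k$ by freeness over $R$) is the standard argument. The telescoping $L_j=(L_1+\cdots+L_j)-(L_1+\cdots+L_{j-1})$ and the resulting computation of the structure constants, giving the diagonal coefficient $c_\stt(j)=d(\stt(j))-d(\stt(j-1))$ and off-diagonal terms supported on $\sts\in\Std_j(\stt(j))$ with $\sts(j-1)\rhd\stt(j-1)$, is also correct. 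However, there is a genuine gap at exactly the point you flag, and the fix you suggest does not work. The lifting error terms $\mathsf{z}$ in your first display satisfy only $\mathsf{z}_{[j,k]}\rhd\stt_{[j,k]}$; the target identity requires in addition $\mathsf{z}(j-1)\unrhd\stt(j-1)$. No application of \cref{restriction by dominance} can supply this: it is invoked with $r-s=j$ and $a=L_j\in A_j$, and it says nothing below level $j$. (Inside its proof the descent from level $r-s$ to level $r-s-1$ is achieved precisely by assuming $a'\in A_{r-s-1}$ and invoking \cref{corollary on restriction}; since $L_j\notin A_{j-1}$, that route is closed here.) So ``combining the concatenation identity with the sharp dominance conclusions of \cref{restriction by dominance}'' cannot close the gap.

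The missing ingredient is the axiom that $L_j$ commutes with $A_{j-1}$. Over $\FF$, the subspace $M^\FF_{k-j+1,k}(\stt(j-1),\la)$ equals $\sum_{\mu\unrhd\stt(j-1)}\Delta^\FF_k(\la)E^{\mu}_{j-1}$, where $E^{\mu}_{j-1}\in A^\FF_{j-1}$ are the central idempotents used in the proof of \cref{LEMMAONE}; this identification is established there by induction on the dominance order. Since $L_j$ commutes with each $E^{\mu}_{j-1}$, right multiplication by $L_j$ preserves each isotypic component $\Delta^\FF_k(\la)E^{\mu}_{j-1}$ and hence preserves $M^\FF_{k-j+1,k}(\stt(j-1),\la)$. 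Therefore $u_\stt L_j$ lies in $M^\FF_{k-j+1,k}(\stt(j-1),\la)$ modulo $A_k^{\rhd\la}\otimes_R\FF$, i.e.\ every $\mathsf{z}$ occurring satisfies $\mathsf{z}(j-1)\unrhd\stt(j-1)$; combined with $\mathsf{z}_{[j,k]}\rhd\stt_{[j,k]}$ this yields $\mathsf{z}_{[j-1,k]}\rhd\stt_{[j-1,k]}$ as required. Finally, the coefficients of the excluded Murphy basis elements, being zero in $\FF$ and lying in $R\hookrightarrow\FF$, vanish in $R$ as well (the mechanism of \cref{lifting lemma}), so the identity holds over $R$. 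With this one addition your argument is complete and coincides with the proof the paper refers the reader to.
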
 

\begin{proof}
One can replace all references to \cite[Proposition 2.18]{MR2774622} 
with references to \hyperref[restriction by dominance]{Proposition~\ref*{restriction by dominance}}  in the proof of \cite[Proposition 3.7]{MR2774622}    
\end{proof}

\begin{prop} \label{proposition: triangularity property of  multiplicative JM elements}
Suppose that $\{L_k: k \ge 1\}$  is a multiplicative family of Jucys--Murphy elements  for the  tower $(A_k)_{k \ge 0}$.
\begin{enumerate}
\item
For $k \ge 1$  and $\la \in \widehat{A}_k$, let 
 $d(\la) \in R$  be  such that $L_1   \cdots  L_k$  acts by the scalar $d(\la)$ on the cell module $\Delta^R_k(\la)$.  Then for all $k \ge 1$,  $\la \in \widehat{A}_k$,   $\stt \in \Std_k(\la)$, and $1 \le j \le k$,  we have
 \begin{equation} \label{equation: triangular action of Lj 2}
 {u}_\stt^\la       L_j  = c_\stt(j) {u}_\stt^\la + \sum_{\sts_{[j-1,k]}  \, \rhd \, \stt_{[j-1,k]} } r_\sts  {u}_\sts^\la \mod A_{r}^{\rhd \la},
 \end{equation}
  for some elements $r_\sts \in R$  (depending on $j$ and $\stt$),
 with $c_\stt(j) = \frac{d(\stt(j))}{d(\stt(j-1))}$. 
  Here, the order $\rhd$ is the dominance order on skew tableaux.  
   \item For each $k \ge 1$,  $ L_1  \cdots  L_k$ is in the center of $A_k$.  
   \end{enumerate}  
 \end{prop}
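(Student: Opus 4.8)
\emph{Proof proposal.} This is the multiplicative twin of Proposition~\ref{proposition: triangularity property of additive JM elements}, so I would prove it by re-running that argument — equivalently, the argument of \cite{MR2774622} with each invocation of \cite[Proposition~2.18]{MR2774622} replaced by Proposition~\ref{restriction by dominance} — the only substantive change being that the additive splitting $L_j=(L_1+\cdots+L_j)-(L_1+\cdots+L_{j-1})$ is replaced by the multiplicative splitting $L_j=(L_1\cdots L_j)(L_1\cdots L_{j-1})^{-1}$. This is legitimate because each $L_i$ is invertible in $A_i$ and the $L_i$ pairwise commute, and it is exactly what converts the difference $d(\stt(j))-d(\stt(j-1))$ of the additive case into the ratio $d(\stt(j))/d(\stt(j-1))$.

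Concretely, for part~(1) I fix $\la\in\widehat A_k$, $\stt\in\Std_k(\la)$ and $1\le j\le k$, and use the factorisation $u_\stt=u_{\stt_{[j,k]}}u_{\stt_{[0,j]}}$ of Theorem~\ref{basisiss} with $u_{\stt_{[0,j]}}\in A_j$. Via the mechanism of Lemma~\ref{first rest} and Proposition~\ref{restriction by dominance}, left multiplication by $u_{\stt_{[j,k]}}$ transports the expansion of $u_{\stt_{[0,j]}}L_j$ in the cell module $\Delta_j^R(\stt(j))$ up to the expansion of $u_\stt L_j$ in $\Delta_k^R(\la)$ modulo $A_k^{\rhd\la}$, the dominance bookkeeping on the resulting error terms (that they carry the bound $\sts_{[j-1,k]}\rhd\stt_{[j-1,k]}$) being handled exactly as in the additive case using Corollary~\ref{corollary on restriction}. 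So it suffices to compute how $L_j$ acts on $\Delta_j^R(\stt(j))$. There $L_1\cdots L_j\in A_j$ acts by the scalar $d(\stt(j))$ by hypothesis~(2), while $L_1\cdots L_{j-1}\in A_{j-1}$ acts — by the $s=1$ case of Proposition~\ref{restriction by dominance} applied to $\Res_{A_{j-1}}^{A_j}(\Delta_j^R(\stt(j)))$ — by an $R$-matrix that is triangular with respect to the dominance order on the vertices $\sts(j-1)$ of tableaux $\sts$, with the $\mu$-diagonal block equal to $d(\mu)\,\mathrm{Id}$. Since $L_1\cdots L_{j-1}$ is invertible in $A_{j-1}$ this matrix is invertible over $R$, hence each $d(\mu)$ occurring is a unit of $R$ and the inverse matrix is again triangular with $\mu$-diagonal block $d(\mu)^{-1}\,\mathrm{Id}$; as the $L_i$ commute, $L_j$ acts as $d(\stt(j))$ times this inverse, hence triangularly with diagonal entry $d(\stt(j))/d(\stt(j-1))\in R$ at $\stt$ and all other terms indexed by strictly more dominant tableaux. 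This yields \eqref{equation: triangular action of Lj 2} with $c_\stt(j)=d(\stt(j))/d(\stt(j-1))$. For part~(2): by hypothesis $L_1\cdots L_k$ acts as a scalar on each cell module $\Delta_k^R(\la)$; extending scalars to $\FF$ and using that $A_k^\FF$ is split semisimple with $\{\Delta_k^\FF(\la):\la\in\widehat A_k\}$ a complete irredundant list of simple modules, $L_1\cdots L_k\otimes 1$ is central in $A_k^\FF$, and since $A_k$ is $R$-free and $R\hookrightarrow\FF$ it is already central in $A_k$.

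The step that I expect to need the most care is the passage from the triangular action of $L_1\cdots L_{j-1}$ to that of $L_j$: one must use that $L_1\cdots L_{j-1}$ is invertible \emph{as an element of $A_{j-1}$}, not merely that its scalars $d(\mu)$ are nonzero, so that it acts on the free $R$-module $\Delta_j^R(\stt(j))$ by a matrix that is invertible over $R$ — it is this that forces the $d(\mu)$ to be units, places $c_\stt(j)$ in $R$, and makes the inverse of the triangular matrix again triangular with the inverted diagonal. The only other thing to watch is that the error terms in \eqref{equation: triangular action of Lj 2} carry the sharp bound $\sts_{[j-1,k]}\rhd\stt_{[j-1,k]}$, but this is the same combinatorial bookkeeping as in the proof of Proposition~\ref{proposition: triangularity property of additive JM elements} and is supplied by Corollary~\ref{corollary on restriction} together with Lemma~\ref{first rest}.
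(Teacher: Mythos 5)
Your proposal is correct and follows the same route as the paper: the paper's proof of this proposition consists precisely of the observation that one reruns the proof of \cite[Proposition 3.6]{MR2774622}, substituting \hyperref[restriction by dominance]{Proposition~\ref*{restriction by dominance}} for each appeal to \cite[Proposition 2.18]{MR2774622}. Your expanded account of that argument --- the multiplicative splitting $L_j=(L_1\cdots L_j)(L_1\cdots L_{j-1})^{-1}$, the triangular and invertible-over-$R$ action of $L_1\cdots L_{j-1}$ forcing each $d(\mu)$ to be a unit, and the resulting diagonal entries $d(\stt(j))/d(\stt(j-1))$ --- is a faithful reconstruction of the details the paper delegates to that citation.
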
 
 
\begin{proof}
One can replace all references to \cite[Proposition 2.18]{MR2774622} 
with references to \hyperref[restriction by dominance]{Proposition~\ref*{restriction by dominance}}  in the proof of \cite[Proposition 3.6]{MR2774622}.    
\end{proof}

\begin{defn}[\mbox{\cite{MR2414949}}]
Suppose  that the map $\cup_{\la\in \widehat{A}_k}\Std_k(\la)\to R^k$ given by $:\stt \mapsto (c_\stt(j))_{1 \le j \le k}$ is injective for all $k\geq 1$.  In this case, the Jucys--Murphy elements are said to satisfy the {\sf separation condition}.  
\end{defn}

 \begin{defn}\label{basis:1}
Let $(A_k)_{k\geq 0}$     be a strongly coherent tower of cellular algebras over $R$ and let $(L_k)_{k\geq1}$ be a set of additive or multiplicative  Jucys--Murphy elements 
satisfying the separation condition.    
Given  $\la \in\widehat{A}_k$ and   $ \stt\in\Std_k(\lambda)$,  
we define an 
element  of the algebra $ A^\FF_k $   as follows 
\begin{align*}
F_\stt =
\prod_{
1\le i\le k
}
\prod_{
\substack{
\stu \in    \Std_k(\rho) 		\\
c_\stu (i)\ne c_\stt (i)}}
\frac{L_i-c_\stu (i)}{c_\stt (i)-c_\stu (i)},
\end{align*}
where the product is taken over all $(\rho,k)\in\widehat{A}_k$.  We let   $f_\stt =u_\stt F_\stt $  and 
$f_{\sts \stt }=F_\sts  d_{\sts}u_{\stt} F_\stt $ for any pair $\sts,\stt\in \Std_k(\la)$.
 \end{defn}

\begin{prop}[\mbox{See~\cite[Section 3]{MR2414949},~\cite[Section~3]{MR2794027}}] \label{seminormal form}
Let $(A_k)_{k\geq 0}$    be a strongly coherent tower of cyclic cellular algebras over $R$ equipped with a set of additive or multiplicative   Jucys--Murphy elements satisfying the separation condition.     
\begin{enumerate}[label=(\arabic{*}), ref=\arabic{*},leftmargin=0pt,itemindent=1.5em]
\item\label{seminormal form.1}  For $k\geq 1$, the set of paths from the zeroth to the $k$th level of the branching graph indexes a full set   of pairwise orthogonal idempotents  in $A_k^\FF$. In other words 
 $$F_\sts F_\stt  =\delta_{\sts\stt}F_\sts \quad
 \sum_{\begin{subarray}c\sts\in\Std_k(\la)\\ \la\in\widehat{A}_k \end{subarray}} F_\sts=1_{{A}^\FF_k},$$
for $\sts \in \Std_k(\la)$ and $\stt \in \Std_k(\mu)$ and $\la,\mu \in \widehat{A}_k$.  For $\lambda \in \widehat{A}_k$, we have idempotents,
$$
1_{ k}^{\vartriangleright \lambda}
 = \sum_{\begin{subarray}c\sts\in\Std_k(\mu)\\ \mu \vartriangleright \la \end{subarray}} F_\sts
 \qquad
 1_{ k}^{\unrhd \lambda}
 = \sum_{\begin{subarray}c\sts\in\Std_k(\mu)\\ \mu \unrhd \la \end{subarray}} F_\sts $$
which act as the identity on the ideals 
$A_k^{\vartriangleright \lambda}\otimes_R \FF$
and 
$A_k^{\unrhd \lambda}\otimes_R \FF$
respectively.  For $\lambda \in \widehat{A}_k$, the idempotent
 $E_k^\lambda$ which acts by projection  onto the simple $A_{k}^\FF$-module $\Delta_k^\FF(\lambda)$ 
 is equal to 
 $$
E_{ k}^{  \lambda}
 = \sum_{ \sts\in\Std_k(\lambda) } F_\sts. 
 $$
\item\label{s-n-d:1} If $\stt \in\Std_k(\lambda)$ for $\la\in\widehat{A}$, then we have that
 \begin{align*}
f_{ \stt} =   u_\stt 
+\sum_{\begin{subarray}c
\stu \in\Std_k(\lambda) \\
{\stu \rhd\stt }
\end{subarray}
} r_\stu u_\stt,
\end{align*}
for scalars  $r_\stu \in \mathbb{F}$.
 \item\label{s-n-d:2}  
${\lbrace}f_{ \stt} \mid \stt \in\Std_k(\lambda){\rbrace}$ is an $\mathbb{F}$-basis for $\Delta^\mathbb{F}(\lambda)$. 
\item\label{s-n-d:4} $f_{ \stt} L_i=c_\stt (i)f_{ \stt} $ for all $ \stt \in\Std_k(\lambda)$ and $i=1,\ldots,k$. 
\item\label{s-n-d:5}  
$f_{\sts } F_{\stt  }= \delta_{\sts \stt }f_{\sts } $ for all $\sts ,\stt  \in\Std_k(\lambda)$.
\item\label{s-n-d:6}  $\langle f_\sts ,f_\stt \rangle=\delta_{\sts \stt}  \langle f_\sts ,f_\sts \rangle$ for all $\sts ,\stt \in\Std_k(\lambda)$.
\end{enumerate}
Therefore the basis in (3)  is a {\sf seminormal basis} in the sense of~\cite{MR2774622} and is unique up to a choice of scaling factors in $\mathbb{F}$. 
 
\end{prop}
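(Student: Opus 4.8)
The plan is to work inside the split semisimple algebra $A_k^\FF$ and to recognise the elements $F_\stt$ as the primitive idempotents attached to the joint eigenspaces of the Jucys--Murphy elements. By generic semisimplicity together with cellularity, $A_k^\FF\cong\bigoplus_{\la\in\widehat A_k}\End_\FF(\Delta_k^\FF(\la))$, with the cell modules realising the simple modules. By \cref{proposition: triangularity property of  additive JM elements} (or its multiplicative analogue \cref{proposition: triangularity property of  multiplicative JM elements}), each $L_i$ acts on $\Delta_k^\FF(\la)$ by a matrix that is triangular with respect to the dominance order on $\Std_k(\la)$ --- all error terms strictly dominating the given tableau --- with diagonal entry $c_\stv(i)$ at $\stv$. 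Since the $L_i$ commute and, by the separation condition, the tuples $(c_\stv(i))_{1\le i\le k}$ are pairwise distinct, the module decomposes as $\Delta_k^\FF(\la)=\bigoplus_{\stv\in\Std_k(\la)}V_\stv$, where $V_\stv$ is the one-dimensional joint generalised eigenspace on which each $L_i$ acts as the scalar $c_\stv(i)$.

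Next I would compute the action of $F_\stt$. Being a polynomial in the commuting $L_i$, the matrix of $F_\stt$ on any cell module $\Delta_k^\FF(\mu)$ is triangular in dominance order with diagonal entry at $\stv$ equal to the defining product of $F_\stt$ evaluated at $(c_\stv(i))_i$. This value is $1$ when $\stv=\stt$ (every factor becomes $\frac{c_\stt(i)-c_\stu(i)}{c_\stt(i)-c_\stu(i)}$) and $0$ when $\stv\ne\stt$ (separation yields an index $i$ with $c_\stv(i)\ne c_\stt(i)$, so $\stu=\stv$ occurs among the factors, contributing a zero); here it is crucial that the product ranges over \emph{all} tableaux on level $k$. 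Hence $F_\stt$ acts on every $\Delta_k^\FF(\mu)$ as the projection onto $V_\stt$, and as $0$ if $\stt\notin\Std_k(\mu)$. Since the regular representation of the semisimple algebra is faithful, the relations $F_\sts F_\stt=\delta_{\sts\stt}F_\stt$ and $\sum_\stt F_\stt=1$ follow; matching $A_k^{\rhd\la}\otimes_R\FF$ and $A_k^{\unrhd\la}\otimes_R\FF$ with the sums of the blocks $\End_\FF(\Delta_k^\FF(\mu))$ over $\mu\rhd\la$, respectively $\mu\unrhd\la$ --- the standard description of cellular ideals in the semisimple case --- identifies $1_k^{\rhd\la}$, $1_k^{\unrhd\la}$ and $E_k^\la$ with the identities of those ideals, giving part (1).

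For the rest: evaluating the diagonal of $F_\stt$ on $\Delta_k^\FF(\la)$ as above gives $f_\stt=u_\stt F_\stt=u_\stt+\sum_{\stu\rhd\stt}r_\stu u_\stu$, which is (2), and since this change of basis is unitriangular it is invertible, giving (3). For (4) I would check that $F_\stt L_i$ and $c_\stt(i)F_\stt$ act identically on every cell module --- on $V_\stt$, $L_i$ is the scalar $c_\stt(i)$, and $F_\stt$ kills the other summands --- so $F_\stt L_i=c_\stt(i)F_\stt$ in $A_k^\FF$ by faithfulness, whence $f_\stt L_i=u_\stt F_\stt L_i=c_\stt(i)f_\stt$. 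Part (5) is immediate from the orthogonality in (1). For (6) I would use the contravariance of the cellular form and $L_i^\ast=L_i$: from $\langle f_\sts L_i,f_\stt\rangle=\langle f_\sts,f_\stt L_i\rangle$ one gets $(c_\sts(i)-c_\stt(i))\langle f_\sts,f_\stt\rangle=0$, and separation forces $\langle f_\sts,f_\stt\rangle=0$ for $\sts\ne\stt$. Uniqueness up to scaling follows because $\Delta_k^\FF(\la)F_\stt=V_\stt$ is one-dimensional, so any seminormal basis vector indexed by $\stt$ is a scalar multiple of $f_\stt$.

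The genuine content, and the reason \cref{restriction by dominance} is proved beforehand, is the triangularity input: one must verify that wherever Mathas and Goodman--Graber invoke the reverse lexicographic order their arguments survive replacement by the coarser dominance order, which they do \emph{a fortiori} once \cref{proposition: triangularity property of  additive JM elements} and \cref{proposition: triangularity property of  multiplicative JM elements} are in hand; everything else is a transcription of the standard seminormal-form construction into the present axiomatic setting. The only other point needing a little care is the bookkeeping that converts ``acts correctly on all cell modules'' into an identity in $A_k^\FF$ and that matches the Lagrange-type idempotents with the cellular ideals, both of which are routine given split semisimplicity.
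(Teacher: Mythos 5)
Your argument is correct and is essentially the construction the paper relies on: the paper gives no proof of its own here, deferring entirely to Mathas and Goodman--Graber, and your sketch is a faithful transcription of that standard seminormal-form argument (triangular commuting JM action plus separation gives one-dimensional joint eigenspaces, the Lagrange-interpolation idempotents $F_\stt$ project onto them, and everything else follows by faithfulness of the sum of cell modules and contravariance of the form). You also correctly identify the one genuinely new ingredient in this paper, namely that \cref{restriction by dominance} upgrades the triangularity in \cref{proposition: triangularity property of  additive JM elements} and \cref{proposition: triangularity property of  multiplicative JM elements} from the reverse lexicographic order to the dominance order, which is exactly the point the authors emphasise.
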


\section{Skew cell modules for diagram algebras}\label{sec:mainresult}

For the remainder of this paper, we shall assume that $(A_k)_{k\geq0}$ is a 
tower of diagram algebras  equipped with a family of Jucys--Murphy elements.  
In this section, we construct   skew cell modules for diagram algebras 
and provide integral bases of these modules indexed by skew tableaux.  
    Given $\la \in \widehat{A}_{r-s}$ and  $\nu \in \widehat{A}_{r}$, we let 
$A^{\vartriangleright \nu \setminus \lambda}_{s,r}$ denote the $R$-subspace of $A_r$ spanned by 
\begin{equation}\label{asfdhjkdhlasdlhfsdlhaadf}
 \big\lbrace  d_{\sts}{u}_{\stt}  
\suchthat \sts,\stt\in \Std_r(\mu), 
 \mu  \vartriangleright \nu
\big\rbrace
\cup \big\lbrace  {u}_{\stt}  
\suchthat \stt\in \Std_r(\nu), 
 \stt(r-s)  \vartriangleright \lambda 
 \big\rbrace, 
\end{equation} 
and we let $A_{s,r}=A_{r-s}\times A_{s}\subseteq A_r$.  We extend this notation  to 
$A_{s,r}^\FF=A_{s,r} \otimes_R \FF$.

\begin{lem}\label{two inclusions}
    If  $\lambda\in \widehat{A}_{r-s}$ and $\nu\in \widehat{A}_r$, then:
\begin{enumerate}[label=(\arabic{*}), ref=\arabic{*},leftmargin=0pt,itemindent=1.5em]
\item\label{first inclusion} 
$M^\mathbb{F}_{s,r}(\lambda,\nu)\subseteq
 \Res^{A^\FF_r}_{A^\FF_{s,r}} (\Delta^\FF_r(\nu))$
  is an inclusion of $A_{s,r}^\mathbb{F}$-modules. 
\item\label{second inclusion} 
$M^R_{s,r}(\lambda,\nu)
\subseteq  \Res^{A_r}_{A_{s,r}} (\Delta^R_r(\nu))$
  is an inclusion of $A_{s,r}^R$-modules.
\end{enumerate}
\end{lem}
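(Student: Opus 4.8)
The plan is to establish the first assertion over the field $\mathbb{F}$ and then transfer it to the ground ring $R$ by invoking \cref{lifting lemma}. The key point for the $\mathbb{F}$-statement is that $M^\mathbb{F}_{s,r}(\lambda,\nu)$ is cut out of $\Delta^\mathbb{F}_r(\nu)$ by a \emph{central} idempotent of $A^\mathbb{F}_{r-s}$. Write $\{E^\mu_{r-s}:\mu\in\widehat{A}_{r-s}\}$ for the central primitive idempotents of the semisimple algebra $A^\mathbb{F}_{r-s}$, as in the proof of \cref{LEMMAONE}, and set $e=\sum_{\mu\unrhd\lambda}E^\mu_{r-s}$. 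The proof of \cref{LEMMAONE} already records that $U^\mathbb{F}_{s,r}(\lambda,\nu)=\Delta^\mathbb{F}_r(\nu)\sum_{\mu\rhd\lambda}E^\mu_{r-s}$; combining the same analysis with the assertion of \cref{LEMMAONE} that $M^\mathbb{F}_{s,r}(\lambda,\nu)/U^\mathbb{F}_{s,r}(\lambda,\nu)$ is a (split, since $A^\mathbb{F}_{r-s}$ is semisimple) direct sum of copies of $\Delta^\mathbb{F}_{r-s}(\lambda)$, together with the fact — a consequence of \cref{LEMMAONE} — that the multiplicity of $\Delta^\mathbb{F}_{r-s}(\mu)$ in $\Res^{A^\mathbb{F}_r}_{A^\mathbb{F}_{r-s}}(\Delta^\mathbb{F}_r(\nu))$ is $\sharp\Std_s(\nu\setminus\mu)$, a downward induction on the dominance order shows that $M^\mathbb{F}_{s,r}(\lambda,\nu)$ is precisely the sum of those $\mu$-isotypic components of $\Res^{A^\mathbb{F}_r}_{A^\mathbb{F}_{r-s}}(\Delta^\mathbb{F}_r(\nu))$ with $\mu\unrhd\lambda$, i.e. $M^\mathbb{F}_{s,r}(\lambda,\nu)=\Delta^\mathbb{F}_r(\nu)\,e$.

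Granting this, the first assertion is immediate. By construction $A_{s,r}=A_{r-s}\times A_s$ inside $A_r$, so $A^\mathbb{F}_s$ commutes elementwise with $A^\mathbb{F}_{r-s}$; as $e$ is central in $A^\mathbb{F}_{r-s}$ it is then central in all of $A^\mathbb{F}_{s,r}$, and therefore $\Delta^\mathbb{F}_r(\nu)\,e$ is an $A^\mathbb{F}_{s,r}$-submodule of $\Res^{A^\mathbb{F}_r}_{A^\mathbb{F}_{s,r}}(\Delta^\mathbb{F}_r(\nu))$, since $(ve)a=v(ea)=v(ae)=(va)e\in\Delta^\mathbb{F}_r(\nu)\,e$ for $v\in\Delta^\mathbb{F}_r(\nu)$ and $a\in A^\mathbb{F}_{s,r}$. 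For the second assertion I would apply \cref{lifting lemma} with $B^R=A_r$, $A^R=A_{s,r}$, $\iota_1$ the tower inclusion $A_{s,r}\hookrightarrow A_r$, $M^R=\Delta^R_r(\nu)$ equipped with its Murphy basis $\{u_\stt+A_r^{\rhd\nu}:\stt\in\Std_r(\nu)\}$, and $N^R=M^R_{s,r}(\lambda,\nu)$, the $R$-span of the subset of that basis indexed by the $\stt$ with $\stt(r-s)\unrhd\lambda$. Then $M^\mathbb{F}=\Delta^\mathbb{F}_r(\nu)$, $N^\mathbb{F}=M^\mathbb{F}_{s,r}(\lambda,\nu)$ and $A^\mathbb{F}=A^\mathbb{F}_{s,r}$, so the hypothesis of \cref{lifting lemma} is exactly the first assertion just proved; its conclusion is that $M^R_{s,r}(\lambda,\nu)$ is closed under the restricted $A_{s,r}$-action on $\Delta^R_r(\nu)$, which is the second assertion.

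The one step requiring genuine care is the identification $M^\mathbb{F}_{s,r}(\lambda,\nu)=\Delta^\mathbb{F}_r(\nu)\,e$: it is not enough to know that $M^\mathbb{F}_{s,r}(\lambda,\nu)$ is an $A^\mathbb{F}_{r-s}$-submodule (that is \cref{LEMMAONE}); one must know that it is a sum of \emph{whole} isotypic components, since only then is the corresponding projection a central element of $A^\mathbb{F}_{r-s}$ rather than merely an element of $\End_{A^\mathbb{F}_{r-s}}(\Delta^\mathbb{F}_r(\nu))$. This is exactly what the order-preserving cell filtration of \cref{LEMMAONE} provides — working down the dominance order, it forces the composition factors of $M^\mathbb{F}_{s,r}(\lambda,\nu)$ to lie among the $\Delta^\mathbb{F}_{r-s}(\mu)$ with $\mu\unrhd\lambda$, and to occur there with their full multiplicities — but it is worth spelling out. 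The only other point to state explicitly is the standard fact that in these diagram algebras the subalgebra $A_{r-s}$ and the shifted copy of $A_s$ genuinely commute in $A_r$, which is precisely what the notation $A_{s,r}=A_{r-s}\times A_s$ encodes.
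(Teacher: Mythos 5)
Your proposal is correct and follows essentially the same route as the paper: identify $M^\mathbb{F}_{s,r}(\lambda,\nu)$ with $\Delta^\mathbb{F}_r(\nu)\,1^{\unrhd\lambda}_{r-s}$ by induction down the dominance order (using \cref{LEMMAONE} and the identification of $U^\mathbb{F}_{s,r}(\lambda,\nu)$ with the image of $\sum_{\mu\rhd\lambda}E^\mu_{r-s}$), observe that this idempotent is central in $A^\mathbb{F}_{s,r}$ since $A_s$ commutes with $A_{r-s}$, and then descend to $R$ via \cref{lifting lemma}. The only cosmetic difference is that the paper gets the matching dimension count from the seminormal basis $\{f_\sts \mid \sts(r-s)=\lambda\}$ of $\Delta^\mathbb{F}_r(\nu)E^\lambda_{r-s}$, whereas you invoke the multiplicity $\sharp\Std_s(\nu\setminus\mu)$ of $\Delta^\mathbb{F}_{r-s}(\mu)$ in the restriction; these amount to the same information.
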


\begin{proof}
We first consider \eqref{first inclusion}.   We note that we have already proven the inclusion $(1)$ on the level of $A_{r-s}^\FF$-modules; it remains to check the inclusion holds on the level of $A_{s,r}^\FF$-modules.  
We claim that 
\begin{align}\label{claim for skew modules}
M^\mathbb{F}_{s,r}(\lambda,\nu)= 
\Delta_r^\mathbb{F}(\nu) 1^{\unrhd \lambda}_{ {r-s}}.
\end{align}
  Note that  the idempotent $ 1^{\unrhd \lambda}_{ {r-s}}$  is central in $A^\FF_{s,r}$ and therefore  the right-hand side of the equation carries the structure of  an $A_{s,r}^\FF$-module.
   Hence  to prove point~\eqref{first inclusion}, it is enough to show that  the relation~\eqref{claim for skew modules} holds.     
Given  $\lambda\in \widehat{A}_{r-s}$ and $\nu\in \widehat{A}_r$, 
we define 
\begin{align*}
 \overline{M}^{R}_{s,r}(\lambda,\nu)= M_{s,r}^R(\lambda,\nu)/ U_{s,r}^R(\lambda,\nu)
\qquad
\overline{M}_{s,r}^\mathbb{F}(\lambda,\nu)= \overline{M}_{s,r}^R(\lambda,\nu)\otimes_R\mathbb{F}.
\end{align*}
 By~\hyperref[corollary on restriction]{Corollary~\ref*{corollary on restriction}}, the space  $\overline{M}_{s,r}^R(\lambda,\nu)$ carries the structure of an  $A_{r-s} $-module. Moreover, by \hyperref[restriction by dominance]{Proposition~\ref*{restriction by dominance}}, 
 any total refinement of the dominance order on skew tableaux 
gives rise  to a    filtration 
\begin{align}\label{cell-filtration}
0=N_0\subseteq N_1\subseteq \cdots\subseteq N_k=\overline{M}_{s,r}^R(\lambda,\nu)
\end{align}
of   $\overline{M}_{s,r}^R(\lambda,\nu)$ by  $A_{r-s} 
$-submodules, where 
\begin{align}\label{filtration}
k=\sharp\Std_s(\nu\setminus\lambda)\qquad\text{and}\qquad N_i/N_{i-1}\cong \Delta^R_{r-s}(\lambda)\quad\text{for $1\le i\le k$.}
\end{align}
 By~\eqref{cell-filtration} and~\eqref{filtration}, we have that  
\begin{equation}\label{quotientfirst}
\overline{M}_{s,r}^\mathbb{F}(\lambda,\nu)E^{\lambda}_{r-s}=\overline{M}_{s,r}^\mathbb{F}(\lambda,\nu) 
\end{equation}as $A^\FF_{r-s}$-modules.  
In other words, for each $\mathsf{t}\in\Std_r(\nu)$ such that $\mathsf{t}(r-s)=\lambda$, there exist  $r_\mathsf{st}\in\mathbb{F}$ whereby
\begin{align}\label{funny scalars}
 {u}_\mathsf{t}E^{\lambda}_{r-s}=\sum_{\substack{\mathsf{s}\in\Std_r(\nu)\\ \mathsf{s}(r-s)=\lambda}}r_\mathsf{st} {u}_\mathsf{s}+\sum_{\substack{\mathsf{s}\in\Std_r(\nu)\\ \mathsf{s}(r-s)\rhd \lambda}}r_\mathsf{st} {u}_\mathsf{s}\mod A_{r}^{\rhd\nu} \otimes_R \FF
\end{align}
and 
\begin{align}\label{funny basis}
\overline{M}^\mathbb{F}_{s,r}(\lambda,\nu)=\Span_\mathbb{F}\big\lbrace  {u}_\mathsf{t}E^{\lambda}_{r-s} + A^{\vartriangleright \nu \setminus \lambda}_{s,r}\otimes_R \FF \suchthat \mathsf{t}(r-s)=\lambda \big\rbrace.
\end{align}
 We now consider the module $\Delta_r^\mathbb{F}(\nu)E^{\lambda}_{r-s}$.  We note that we can identify 
 $$
E^{\lambda}_{r-s}
=
 \sum_{
\begin{subarray}c
\stt \in \Std_{r-s}(\lambda)
\end{subarray}
} 
F_\stt
=
 \sum_{
\begin{subarray}c
\mu \in \widehat{A}_r \\
\stt \in \Std_r(\mu)\\
\stt(r-s)=\lambda  
\end{subarray}
} 
F_\stt
\in A_{r}^\FF
$$
  by 
  \cref{seminormal form}.  
  Therefore, in terms of  the seminormal basis of the   cell module, 
\begin{align}\label{sadfsadfasdfsdffsadsafsfadsafdsfadssdffasfsa}
\Delta_r^\mathbb{F}(\nu)E^{\lambda}_{r-s}
&=
{\rm Span}_\FF\left\{u_\sts F_\sts \mid \sts \in \Std_r(\nu)\right\} E^{\lambda}_{r-s}
 =
{\rm Span}_\FF\left\{
 f_\sts \mid \sts \in \Std_r(\nu), \sts(r-s)=\lambda \right.
 \} 
\end{align}
Hence,
as a $A_{r-s}^\mathbb{F} $-module,    
  $\Delta_r^\mathbb{F}(\nu)E^{\lambda}_{r-s}
$ decomposes as 
$\sharp\Std_s(\nu\setminus\lambda)$ copies of $ \Delta_{r-s}^\mathbb{F}(\lambda)$. 
%
%
%
%
We observe  
that \cref{funny basis} and \cref{sadfsadfasdfsdffsadsafsfadsafdsfadssdffasfsa} imply 
\begin{align}\label{sfanmhl}
\dim_\mathbb{F}\big(\Delta_r^\mathbb{F}(\nu)E^{\lambda}_{r-s}\big)=\dim_\mathbb{F}\big(\overline{M}_{s,r}^\mathbb{F}(\lambda,\nu)\big).
\end{align}
Having shown the equality of dimensions in  \cref{sfanmhl}, we may now prove the claim, \cref{claim for skew modules}, by induction on the dominance order.  
 If $\lambda$ is maximal 
 we have that $U_{s,r}^\mathbb{F}(\lambda,\nu)= 0$ and so 
 $$
\overline{M}_{s,r}^\mathbb{F}(\lambda,\nu)E^{\lambda}_{r-s}
=
 {M}_{s,r}^\mathbb{F}(\lambda,\nu)E^{\lambda}_{r-s}.
 $$
 Therefore  by \cref{quotientfirst}, we have 
$$
 {M}_{s,r}^\mathbb{F}(\lambda,\nu)E^{\lambda}_{r-s} =
 \Span_\mathbb{F}\big\lbrace  {u}_\mathsf{t}E^{\lambda}_{r-s} + A^{\vartriangleright \nu }_{ r}\otimes_R \FF \suchthat \mathsf{t}(r-s)=\lambda \big\rbrace.
$$
as an $A_{r-s}^\FF$-submodule of $\Delta_r^\FF(\nu) E_\lambda^{r-s}$.   Now, by \cref{sfanmhl}, we have 
 \[
\overline{M}_{s,r}^\mathbb{F}(\lambda,\nu)\cong M_{s,r}^\mathbb{F}(\lambda,\nu)= \Delta_r^\mathbb{F}(\nu)E^{\lambda}_{r-s},
\]
as required.  
Now by our inductive assumption (on the dominance order on ${\widehat{A}}_{r-s}$), the space  
\[
U_{s,r}^\mathbb{F}(\lambda,\nu)=\sum_{\substack{\mu\in \widehat{A}_{r-s}\\ \mu\rhd\lambda}} \Delta_r^\mathbb{F}(\nu)E^{\mu}_{r-s}
=
\Delta_r^\mathbb{F}(\nu) 1_{r-s}^{\rhd \lambda} 
\] 
  is an $A_{s,r}^\FF$-module.
 We note that $E^{\lambda}_{r-s}=1^{\unrhd \lambda}_{r-s} - 1^{\rhd \lambda}_{r-s}$ and therefore by  
 \eqref{funny scalars} and~\eqref{funny basis},   
we immediately obtain equation~\eqref{claim for skew modules}, by induction. 
This completes the proof of  \eqref{first inclusion}.
Point \eqref{second inclusion} now follows immediately by \hyperref[lifting lemma]{Lemma~\ref*{lifting lemma}}.  
\end{proof}

\begin{thm}\label{corollary for skew def}
Let  $\la \in \widehat{A}_{r-s}$,  $\nu \in \widehat{A}_{r}$, and let   $\stt^\lambda$ be any maximal element in the dominance ordering on $\Std_{r-s}(\lambda)$.   The $R$-module  
\begin{align*}
\Delta_s^R(\nu\setminus\lambda)= \Span_R\big\lbrace u_{\mathsf{u}}  {u}_{\stt^\lambda}  + A^{\vartriangleright \nu \setminus \lambda}_{s,r} \suchthat \mathsf{u}\in\Std_s(\nu\setminus\lambda)\big\rbrace 
\end{align*}
carries the structure of an $  A_{s} $-module under the identification 
$1\times A_{s} \subseteq A_{r-s} \times A_{s} \subseteq A_{r} $.  
\end{thm}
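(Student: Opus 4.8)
The plan is to deduce the theorem from \cref{two inclusions} by restricting the $A_{s,r}^R$-module structure on $M^R_{s,r}(\lambda,\nu)$ along the embedding $1\times A_s\subseteq A_{r-s}\times A_s=A_{s,r}$ and then identifying the span in the statement with a natural $A_s$-submodule (or quotient) of $M^R_{s,r}(\lambda,\nu)$. First I would work over $\mathbb F$, where the seminormal machinery of \cref{seminormal form} is available. Using the factorisation $u_\stt=u_{\stt_{[r-s,r]}}u_{\stt_{[0,r-s]}}$ of \cref{basisiss} together with \cref{restriction by dominance}, the action of $a\in A_s\subseteq A_{s,r}$ on a basis vector $u_\stt+A_r^{\rhd\nu}$ with $\stt(r-s)=\lambda$ only alters the "top half" $\stt_{[r-s,r]}$ of the path, producing a combination of $u_\sts$ with $\sts(r-s)\unrhd\lambda$ and $\sts_{[r-s,r]}\rhd\stt_{[r-s,r]}$ modulo $A_r^{\rhd\nu}$ (this is exactly the content of \cref{restriction by dominance} read with the roles of the two algebra factors). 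Hence the subspace spanned by $\{u_\mathsf u u_{\stt^\lambda}+A^{\vartriangleright\nu\setminus\lambda}_{s,r}:\mathsf u\in\Std_s(\nu\setminus\lambda)\}$ — where we fix the bottom half to be $\stt^\lambda$ and let the top half vary — is closed under $A_s^\FF$ modulo the span of basis vectors with strictly larger bottom half ($\sts(r-s)\rhd\lambda$) or strictly larger $\nu$, i.e. modulo $A^{\vartriangleright\nu\setminus\lambda}_{s,r}$.

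The key point making $\stt^\lambda$ a legitimate choice is the reduction to the dominance order achieved in \cref{restriction by dominance}: because $\stt^\lambda$ is maximal in the dominance order on $\Std_{r-s}(\lambda)$, any $u_\sts$ appearing in $u_{\stt^\lambda}a'$ for $a'\in A_{r-s}$ with $\sts(r-s)=\lambda$ and $\sts\rhd\stt^\lambda$ cannot exist, so the bottom half $\stt^\lambda$ is genuinely stable under the $A_{r-s}$-action modulo $A_{r-s}^{\rhd\lambda}$; this is what ensures that the quotient $M^\FF_{s,r}(\lambda,\nu)/A^{\vartriangleright\nu\setminus\lambda}_{s,r}$ restricted to $A_s$ splits off the single "strip" indexed by $\stt^\lambda$ cleanly, rather than mixing with other elements of $\Std_{r-s}(\lambda)$. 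Concretely, I would observe that $\Span_\FF\{u_\mathsf u u_{\stt^\lambda}:\mathsf u\}$ together with $A^{\vartriangleright\nu\setminus\lambda}_{s,r}$ is $A_s^\FF$-stable, so $\Delta^\FF_s(\nu\setminus\lambda):=\Delta^R_s(\nu\setminus\lambda)\otimes_R\FF$ is an $A_s^\FF$-module; the structure constants lie in $R$ by \cref{restriction by dominance}, so in fact the integral span $\Delta^R_s(\nu\setminus\lambda)$ is already $A_s^R$-stable. Alternatively, and perhaps more cleanly, one applies \cref{lifting lemma} with $B^R=A^R_{s,r}$, $A^R=1\times A_s$, $M^R=\Delta^R_r(\nu)/A^{\vartriangleright\nu\setminus\lambda}_{s,r}\cap\Delta^R_r(\nu)$ and $N^R=\Delta^R_s(\nu\setminus\lambda)$, once one has checked that $N^\FF$ is an $A_s^\FF$-submodule of $M^\FF$.

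The main obstacle I anticipate is the bookkeeping needed to see that fixing the bottom half to $\stt^\lambda$ (rather than summing over all of $\Std_{r-s}(\lambda)$) still yields a well-defined module: one must verify that when $a\in A_s$ acts, no "off-diagonal" terms $u_\mathsf v u_\mathsf s$ with $\mathsf s\in\Std_{r-s}(\lambda)$, $\mathsf s\neq\stt^\lambda$ sneak in. This is exactly where the maximality of $\stt^\lambda$ in the dominance order and the improved triangularity of \cref{restriction by dominance} (over the reverse lexicographic version) are essential: since $a\in A_s$ commutes with nothing in $A_{r-s}$ a priori, one genuinely uses that the $A_s$-action on the top half, expressed via \cref{basisiss}, leaves the concatenated basis element $u_{\mathsf u}u_{\stt^\lambda}$ with its bottom half untouched up to terms in $A^{\vartriangleright\nu\setminus\lambda}_{s,r}$. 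Once that is in place, $A_s$-stability of the span and hence the module structure follow formally, and the integrality is immediate from the $R$-coefficients in \cref{restriction by dominance}; the remaining assertions (that the displayed set is an $R$-basis, indexed by $\Std_s(\nu\setminus\lambda)$) are part of what \cref{two inclusions} and the filtration \eqref{cell-filtration} already deliver.
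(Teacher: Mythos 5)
Your overall architecture matches the paper's: work over $\mathbb F$ first, exploit the maximality of $\stt^\lambda$, and descend to $R$ via \cref{lifting lemma}. But the central step --- why $\Span_\FF\{u_{\stu}u_{\stt^\lambda}\}$ is stable under $1\times A_s^\FF$ modulo $A^{\vartriangleright\nu\setminus\lambda}_{s,r}\otimes_R\FF$ --- rests on a claim that is not available. You assert that the action of $a\in A_s$ on $u_\stt$ ``only alters the top half'' of the path, producing terms with $\sts_{[r-s,r]}\rhd\stt_{[r-s,r]}$, and you attribute this to \cref{restriction by dominance} ``read with the roles of the two algebra factors'' swapped. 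No such symmetric statement exists: \cref{restriction by dominance} describes the action of $a\in A_{r-s}$ (the \emph{bottom} subalgebra) and says it rewrites the bottom segment $\stt_{[0,r-s]}$ while possibly raising the top segment in dominance. The Murphy basis factorisation of \cref{basisiss} is built by ascending the tower and is not symmetric under exchanging the two factors, so there is no direct description of how the top copy $1\times A_s$ acts on concatenated basis elements. Your ``main obstacle'' paragraph correctly identifies exactly this issue, but the proposed resolution (maximality of $\stt^\lambda$ plus \cref{restriction by dominance}) simply restates the desired conclusion; it is circular as written.

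The paper closes this gap with an idempotent argument that your proposal does not contain. Since $F_{\stt^\lambda}$ is built from the Jucys--Murphy elements $L_1,\dots,L_{r-s}$, it lies in $A_{r-s}^\FF$ and therefore commutes with $1\times A_s^\FF$; hence $\overline{M}^\FF_{s,r}(\lambda,\nu)F_{\stt^\lambda}$ is \emph{automatically} an $A_s^\FF$-submodule once \cref{two inclusions} gives the $A_{s,r}^\FF$-module structure on $\overline{M}^\FF_{s,r}(\lambda,\nu)$. The remaining work is to identify this space with $\Span_\FF\{u_{\stu}u_{\stt^\lambda}\}$: maximality of $\stt^\lambda$ gives $u_{\stt^\lambda}F_{\stt^\lambda}\equiv u_{\stt^\lambda}\bmod A_{r-s}^{\rhd\lambda}$, and then \cref{restriction by dominance} (now legitimately applied, since $F_{\stt^\lambda}\in A_{r-s}^\FF$) shows the change of basis between $\{u_{\stu}u_{\stt^\lambda}F_{\stt^\lambda}\}$ and $\{u_{\stu}u_{\stt^\lambda}\}$ is unitriangular with respect to dominance, so the two sets span the same space. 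To repair your proof you would need to replace the ``swapped roles'' step with this mechanism (or an equivalent one); as it stands the $A_s$-stability of your span is unsupported.
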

\begin{proof}
Let $\Delta_s^\mathbb{F}(\nu\setminus\lambda)=\Delta_s^R(\nu\setminus\lambda)\otimes_R\mathbb{F}$. It will suffice to show that 
\begin{align}\label{submodule over the field}
\Delta_s^\mathbb{F}(\nu\setminus\lambda)\subseteq \overline{M}_{s,r}^\mathbb{F}(\lambda,\nu)
\end{align}
as  a module for 
$1\times A_s^\mathbb{F}
 \subseteq A_{r-s}^\FF \times A_{s}^\FF 
 \subseteq A_{r}^\FF $.
 We know that $ u_{\stt^\lambda} F_{\stt^\lambda} = u_{\stt^\lambda}$ modulo $A_{r-s}^{\rhd \lambda}$ and therefore, by \cref{restriction by dominance},  we conclude that  for any $\stu \in\Std_s(\nu\setminus\lambda)$  there exist scalars 
\[
\lbrace r_{\stv\stu}\in\mathbb{F}\suchthat \mathsf{v}\in \Std_s(\nu\setminus\lambda)\text{ and } \mathsf{v} \unrhd \stu \rbrace,
\] 
which depend only on $\stv$ and $\stu$, such that 
\begin{align}\label{skews}
u_{  \mathsf{u} } u_{ {\stt^\lambda}}F_{\stt^\lambda}\equiv
u_{  \mathsf{u} } u_{ {\stt^\lambda}}+ 
\sum_{\substack{\mathsf{v}\in \Std_s(\nu\setminus\lambda)\\ \mathsf{v} \rhd \mathsf{u} }} r_\mathsf{v\stu}u_\mathsf{v}u_{\stt^\lambda}
\mod 
A^{\vartriangleright \nu \setminus \lambda}_{s,r} \otimes_R \FF.
\end{align}
  Hence the two sets
\begin{align}\label{two bases}
\{u_{  \mathsf{u} } u_{ {\stt^\lambda}}F_{\stt^\lambda} + A^{\rhd\nu\setminus\lambda}_{s,r}\otimes_R\FF \mid
\stu \in \Std_s(\nu\setminus\lambda)\}\quad \text{ and } \quad 
     \{u_{  \mathsf{u} } u_{ {\stt^\lambda}} + A^{\rhd\nu\setminus\lambda}_{s,r}\otimes_R\FF  \mid \stu \in \Std_s(\nu\setminus\lambda)\}
\end{align}
span the same $\mathbb{F}$-module. Moreover, the change of basis matrix between the two bases in~\eqref{two bases}  is uni-triangular with respect to any total refinement of the dominance ordering on standard tableaux.  By inverting this change of basis matrix  we obtain scalars 
\[
\lbrace r_\mathsf{uv}'\in\mathbb{F}\suchthat \mathsf{v}\in \Std_s(\nu\setminus \lambda)\text{ and } \mathsf{v} \unrhd \stu \rbrace,
\]
which depend only on $\stv$ and $\stu$, such that 
\begin{align}\label{inverted skews}
u_{\mathsf{u}}
u_{{\stt^\lambda} }  \equiv
u_{\mathsf{u}}u_{\stt^\lambda} F_{\stt^\lambda } + 
\sum_{\substack{\mathsf{v}\in \Std_s(\nu\setminus\lambda)\\ \mathsf{v} \rhd \mathsf{u} }} r_\mathsf{vu}'u_\mathsf{v} u_{\stt^\lambda} F_{\stt^\lambda }\mod A^{\vartriangleright \nu \setminus \lambda}_{s,r} \otimes_R \FF.
\end{align}
Therefore, $\Delta_s^\FF(\nu\setminus\lambda) = \overline{M}^\FF_{s,r}(\lambda,\nu) F_{\stt^\lambda}$
and hence \cref{submodule over the field} holds.  The result now follows by 
\cref{lifting lemma}.  
\end{proof}

  For the purposes of  book-keeping, we now collect together  
   \cref{asfdhjkdhlasdlhfsdlhaadf} and \cref{corollary for skew def} in the following definition.  
  
  \begin{defn}
Given $\la \in \widehat{A}_{r-s}$,  $\nu \in \widehat{A}_{r}$, 
we define the associated {\sf skew cell module}, $\Delta^R_s(\nu\setminus\lambda)$, for $A_s$  as follows:
\begin{align*}
\Delta_s^R(\nu\setminus\lambda)= \Span_R\big\lbrace u_{\mathsf{u}} {u}_{\stt^\lambda}  + 
A^{\vartriangleright \nu \setminus \lambda}_{s,r}\suchthat \stu \in\Std_s(\nu\setminus\lambda)\big\rbrace  
\end{align*}
where $\stt^\lambda $ is any  maximal element  in the dominance ordering on $\Std_{r-s}(\la)$ and 
$A^{\vartriangleright \nu \setminus \lambda}_{s,r}$ denotes the $R$-subspace of $A_r$ spanned by 
$ 
 \big\lbrace  d_{\sts}{u}_{\stt}  
\suchthat \sts,\stt\in \Std_r(\mu), 
 \mu  \vartriangleright \nu
\big\rbrace
\cup \big\lbrace  {u}_{\stt}  
\suchthat \stt\in \Std_r(\nu), 
 \stt(r-s)  \vartriangleright \lambda 
 \big\rbrace. 
$ 
 \end{defn}

We now consider the decomposition of skew cell modules over the field of fractions, $\FF$.
We let $\Delta^\FF_s(\nu\setminus\lambda) = \Delta^R_s(\nu\setminus\lambda) \otimes_R \FF$.

\begin{thm}\label{fdsakhjasldfhlhsdfsdfhl}
Given $\la \in \widehat{A}_{r-s}$, $\mu \in \widehat{A}_{s}$,  $\nu \in \widehat{A}_{r}$, 
 we have that
 \begin{align*}
 \Hom_{A^\FF_{r-s}\times A^\FF_s}
(\Delta_{r-s}^\FF(\lambda)\boxtimes \Delta_s^\FF(\mu), \Res^{A^\FF_{r}}_{A^\FF_{r-s}\times A^\FF_s}(\Delta_s^\FF(\nu ) ))
 \cong 
\Hom_{A^\FF_s}
(\Delta_s^\FF(\mu), \Delta_s^\FF(\nu\setminus\lambda)). 
  \end{align*}
\end{thm}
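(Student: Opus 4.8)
The plan is to exhibit a natural isomorphism by leveraging the fact, established in the proof of \cref{corollary for skew def}, that $\Delta_s^\FF(\nu\setminus\lambda) \cong \overline{M}^\FF_{s,r}(\lambda,\nu)F_{\stt^\lambda}$ as $A_s^\FF$-modules, together with the filtration/decomposition data from \cref{LEMMAONE} and \cref{restriction by dominance}. First I would identify, using \cref{seminormal form}\eqref{seminormal form.1}, the idempotent $F_{\stt^\lambda}$ as a primitive idempotent in $A^\FF_{r-s}$ with $F_{\stt^\lambda}A^\FF_{r-s}F_{\stt^\lambda} \cong \FF$ (by the separation condition) and $A^\FF_{r-s}F_{\stt^\lambda}$ isomorphic, as a left $A^\FF_{r-s}$-module, to (a twist of) $\Delta^\FF_{r-s}(\lambda)$ — here one uses that $\stt^\lambda$ is maximal so $f_{\stt^\lambda} = u_{\stt^\lambda}$ modulo more dominant terms and the cell module is cyclic. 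Consequently $\Delta^\FF_{r-s}(\lambda)\boxtimes -$ applied to an $A_s^\FF$-module, when we take $\Hom_{A^\FF_{r-s}\times A^\FF_s}$ out of it, is adjoint to "cut down by $F_{\stt^\lambda}$"; more precisely, for any $A^\FF_{r-s}\times A^\FF_s$-module $W$ on which $A^\FF_{r-s}$ acts through its action coming from the restriction, one has
\begin{align*}
\Hom_{A^\FF_{r-s}\times A^\FF_s}(\Delta^\FF_{r-s}(\lambda)\boxtimes \Delta^\FF_s(\mu), W)
\cong \Hom_{A^\FF_s}(\Delta^\FF_s(\mu), W F_{\stt^\lambda}).
\end{align*}
This is the standard idempotent-truncation adjunction, valid because $A^\FF_{r-s}$ is split semisimple (axiom \eqref{axiom: semisimplicity}) and $\Delta^\FF_{r-s}(\lambda)$ is the unique simple summand of $A^\FF_{r-s}F_{\stt^\lambda}$.

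Next I would apply this with $W = \Res^{A^\FF_r}_{A^\FF_{r-s}\times A^\FF_s}(\Delta^\FF_s(\nu))$ — note the paper's $\Delta^\FF_s(\nu)$ is its notation for $\Delta^\FF_r(\nu)$ under \cref{dsfakjhasdfjhksadf} — to obtain
\begin{align*}
\Hom_{A^\FF_{r-s}\times A^\FF_s}(\Delta^\FF_{r-s}(\lambda)\boxtimes \Delta^\FF_s(\mu), \Res(\Delta^\FF_r(\nu)))
\cong \Hom_{A^\FF_s}(\Delta^\FF_s(\mu), \Delta^\FF_r(\nu)F_{\stt^\lambda}).
\end{align*}
It then remains to identify the $A^\FF_s$-module $\Delta^\FF_r(\nu)F_{\stt^\lambda}$ with the skew cell module $\Delta^\FF_s(\nu\setminus\lambda)$. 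For this I would combine two facts already in hand: by \cref{sadfsadfasdfsdffsadsafsfadsafdsfadssdffasfsa} and the surrounding argument, $\Delta^\FF_r(\nu)E^\lambda_{r-s}$ has seminormal basis $\{f_\sts \mid \sts\in\Std_r(\nu),\ \sts(r-s)=\lambda\}$, and since $E^\lambda_{r-s} = \sum_{\stt\in\Std_{r-s}(\lambda)}F_\stt$ with the $F_\stt$ orthogonal, truncating further by $F_{\stt^\lambda}$ picks out exactly $\{f_\sts \mid \sts(r-s) = \lambda,\ \sts_{[0,r-s]} = \stt^\lambda\}$, i.e. a basis indexed by $\Std_s(\nu\setminus\lambda)$. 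Comparing with the unitriangular change of basis in \cref{skews}–\cref{inverted skews}, this $\FF$-span coincides with $\overline{M}^\FF_{s,r}(\lambda,\nu)F_{\stt^\lambda} = \Delta^\FF_s(\nu\setminus\lambda)$, and the identification is compatible with the $1\times A^\FF_s$-action (both realised inside $\Delta^\FF_r(\nu)$, and $F_{\stt^\lambda}$ is central in $A^\FF_{s,r}$, so right multiplication by $A^\FF_s$ commutes with it). This yields the displayed isomorphism over $\FF$; its naturality in $\mu$ (and in $\nu$) is clear from the constructions. Since all the modules in the statement are defined over $\FF$ already, no integrality issue arises.

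The main obstacle I anticipate is the bookkeeping in the last identification — matching $\Delta^\FF_r(\nu)F_{\stt^\lambda}$ with $\Delta^\FF_s(\nu\setminus\lambda)$ as $A^\FF_s$-modules and not merely as vector spaces. One must be careful that the $A^\FF_s = 1\times A^\FF_s$-action on $\Delta^\FF_r(\nu)F_{\stt^\lambda}$ via the embedding $A_{s,r}\subseteq A_r$ really does agree with the intrinsic action on the skew cell module built in \cref{corollary for skew def}; this is exactly what \cref{skews} and \cref{inverted skews} were set up to deliver, since those congruences are modulo $A^{\vartriangleright\nu\setminus\lambda}_{s,r}$ and involve only right multiplication by elements of $A_s$. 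A secondary subtlety is justifying $A^\FF_{r-s}F_{\stt^\lambda}\cong \Delta^\FF_{r-s}(\lambda)$ as left modules with the correct "boxtimes" convention (left vs. right, and the involution-twist), but this is routine given split semisimplicity and cyclic cellularity, so I would dispatch it with a reference to \cref{seminormal form} and \cref{c-d}.
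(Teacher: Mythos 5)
Your proposal is correct and follows essentially the same route as the paper: both arguments come down to cutting the restricted module by the idempotent $F_{\stt^\lambda}$ and using the unitriangular change of basis in \cref{skews}--\cref{inverted skews} to identify the result with $\Delta^\FF_s(\nu\setminus\lambda)$. The paper phrases this by tracking the data determining a homomorphism on each side (applying $F_{\stt^\lambda}$ and then $F_{\stt^\mu}$ to the image of the generator $f_{\stt^\lambda}\times f_{\stt^\mu}$), whereas you package the identical mechanism as the standard idempotent-truncation adjunction; this is a presentational difference only.
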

\begin{proof}
Any  $\varphi \in  \Hom_{A^\FF_{r-s}\times A^\FF_s}
(\Delta_{r-s}^\FF(\lambda)\boxtimes \Delta_s^\FF(\mu),  \Res^{A^\FF_{r}}_{A^\FF_{r-s}\times A^\FF_s}(\Delta_s^\FF(\nu ) ))$
is determined by  
$$\varphi(f_{\stt^\lambda} \times f_{\stt^\mu})=\sum_{\sts \in \Std_r(\nu)} a_\sts u_\sts \mod{A^{\rhd\nu}}_r $$
for some $a_\sts\in\FF$.   Moreover
$F_{\stt^\lambda} \cdot \varphi(f_{\stt^\lambda} \times f_{\stt^\mu})=\varphi(f_{\stt^\lambda} \times f_{\stt^\mu})$
and therefore by  \cref{inverted skews} and the remarks immediately afterward, we conclude that 
 $\varphi$ is determined by 
\begin{equation}\label{tradfkhja1}\varphi(f_{\stt^\lambda} \times f_{\stt^\mu})=\sum_{\bar{\sts} \in \Std_s(\nu\setminus\lambda)} b_{\bar{\sts}} u_{\bar{\sts}  }u_{\stt^\lambda}
 \mod{A^{\rhd\nu\setminus\lambda}}_r 
\end{equation}for some $b_{\bar{\sts}}\in\FF$.  
By definition of the module $\Delta^\FF_s(\nu\setminus\lambda)$, it is clear that any homomorphism $\psi  \in \Hom_{A^\FF_s}
(\Delta_s^\FF(\mu), \Delta_s^\FF(\nu\setminus\lambda)) $ is determined by 
\begin{equation}\label{tradfkhja2}
\psi(  f_{\stt^\mu})=\sum_{\stu \in \Std_s(\nu\setminus\lambda)} c_\stu u_\stu u_{\stt^\lambda}\mod{A^{\rhd\nu\setminus\lambda}}_r 
\end{equation}for some $c_\stu\in\FF$.  
  One can now apply the idempotent $F_{\stt^\mu}$ to both sides of \cref{tradfkhja1,tradfkhja2}
 and the result follows.  
    \end{proof}

\section{The partition algebra and the stable Kronecker coefficients }

Recall that a {\sf partition}    $\lambda $  is defined to be a weakly decreasing sequence   of non-negative integers. 
We define the {\sf degree} of the partition to be the sum   $|\lambda| = \lambda_1+\ldots +\lambda_\ell$.  
 Given
  $\lambda  $ a partition and $n\in \mathbb{N}$ sufficiently large  we  let 
  $$\lambda_{[n]}=(n-|\lambda|, \lambda_1,\lambda_2, \ldots,\lambda_{\ell}).$$  
Given 
$\lambda $ a partition of degree $r-s$, $\mu$ a partition of degree $s$ and 
$\nu$  a partition of degree less than or  equal to 
$ r $, we define the  Kronecker coefficients 
  to be the multiplicities  
 \begin{equation} 
  {g}_{\lambda_{[n]} , \mu_{[n]} } 
  ^{\nu_{[n]}}= 
  {\rm dim}_{\mathbb{Q}}{\rm Hom}_{\mathfrak{S}_n}
  ( S^{\mathbb{Q}}(\lambda_{[n]})\otimes  S^{\mathbb{Q}}(\mu_{[n]})
  ,S^{\mathbb{Q}}(\nu_{[n]})) .
\end{equation}
for  $n $ sufficiently large.
   As we
  let $n$ increase, the sequence of coefficients  obtained
\begin{enumerate}[label=(\arabic{*}), ref=\arabic{*},leftmargin=0pt,itemindent=1.5em]
\item is   weakly increasing, in other words   
  $   g^{\nu_{[n]}}_{\lambda_{[n]},\mu_{[n]}} \leq g^{\nu_{[n+1]}}_{\lambda_{[n+1]},\mu_{[n+1]}} \text{ for all $n$}$;  
  \item   and   for some  $N \in \mathbb N$ the sequence  stabilises        and  
   we obtain   $  \overline g^{\nu  }_{\lambda ,\mu } =   g^{\nu_{[N+k]}}_{\lambda_{[N+k]},\mu_{[N+k]}} $ for all $k\geq 0$. 
  \end{enumerate} 
  The  limiting values, $\overline g^{\nu  }_{\lambda ,\mu }$,  are known as the {\sf stable Kronecker coefficients}.  
 This is  illustrated in the following example. 
 \begin{example}
We have the following tensor products of Specht modules:
$$\begin{array}{cll}   
 n=2 	&\quad &    \mathbf{S}(1^2) \otimes \mathbf{S}(1^2)  = \mathbf{S}(2)  \\
 n=3  &\quad &\mathbf{S}(2,1) \otimes \mathbf{S}(2,1)  = \mathbf{S}(3) \oplus \mathbf{S}(2,1) \oplus \mathbf{S}(1^3) \\
n=4  &\quad &\mathbf{S}(3,1) \otimes \mathbf{S}(3,1)  = \mathbf{S}(4) \oplus \mathbf{S}(3,1)\oplus \mathbf{S}(2,1^2) \oplus \mathbf{S}(2^2)  
\end{array}
$$ 
 at which point the product stabilises, i.e. for all $n\geq4$, we have 
 $$\mathbf{S}(n-1,1) \otimes \mathbf{S}(n-1,1) = \mathbf{S}(n) \oplus \mathbf{S}(n-1,1)\oplus \mathbf{S}(n-2,1^2) \oplus \mathbf{S}(n-2,2).$$
   \end{example}

%
%
%
  
For $k \in \mathbb{Z}_{\geq0}$ and $n\in \mathbb{N}$, we let $P_k(n)$ denote the $\mathbb{Z}$-module with basis given by all set-partitions of $\{1,2,\dots, k,\bar1,\bar2,\dots,\bar k\}$.   A part of a set-partition is called a {\sf  block}.  
For example, 
$$d=\{\{\overline1, \overline2, \overline4, {2}, {5}\}, \{\overline3\}, \{\overline5, \overline6, \overline7, {3}, {4}, {6}, {7}\}, \{\overline8, {8}\}, \{{1}\}\},$$
is a set-partition (for $k=8$) with 5  blocks and $p(d)=3$.
  A  set-partition can be represented 
 by a  diagram  consisting of a frame with $k$ distinguished points on the northern and southern boundaries, which we call vertices.  We number the northern vertices from left to right by $\overline{1},\overline{2},\dots, \overline{k}$ and the southern vertices similarly by $1,2,\dots, k$  and connect two vertices by a path if they belong to the same block.     Note that such a diagram is not uniquely defined, two diagrams representing the set-partition $d$ above are given in Figure \ref{2diag}.

 \begin{figure}[ht]\scalefont{0.8}
\begin{tikzpicture}[scale=0.45]
  \draw (0,0) rectangle (8,3);
  \foreach \x in {0.5,1.5,...,7.5}
    {\fill (\x,3) circle (2pt);
     \fill (\x,0) circle (2pt);}
  \begin{scope}
    \draw (0.5,3) -- (1.5,0);
    \draw (7.5,3) -- (7.5,0);
    \draw (4.5,3) -- (2.5,0);
    \draw (0.5,3) arc (180:360:0.5 and 0.25);
    \draw (1.5,3) arc (180:360:1 and 0.25);
    \draw (4.5,0) arc (0:180:1.5 and 1);
    \draw (5.5,0) arc (0:180:1 and .7);
    \draw (3.5,0) arc (0:180:.5 and .25);
    \draw (6.5,0) arc (0:180:0.5 and 0.5);
    \draw (4.5,3) arc (180:360:0.5 and 0.25);
    \draw (5.5,3) arc (180:360:0.5 and 0.25);
      \draw (2.5,-0.49) node {$3$};   \draw (2.5,+3.5) node {$\overline{3}$};
                  \draw (1.5,-0.49) node {$2$};   \draw (1.5,+3.5) node {$\overline{2}$};
                           \draw (0.5,-0.49) node {$1$};   \draw (0.5,+3.5) node {$\overline{1}$};
         \draw (3.5,-0.49) node {$4$};   \draw (3.5,+3.5) node {$\overline{4}$};
                  \draw (4.5,-0.49) node {$5$};   \draw (4.5,+3.5) node {$\overline{5}$};
                           \draw (5.5,-0.49) node {$6$};   \draw (5.5,+3.5) node {$\overline{6}$};
         \draw (6.5,-0.49) node {$7$};   \draw (6.5,+3.5) node {$\overline{7}$};         \draw (7.5,-0.49) node {$8$};   \draw (7.5,+3.5) node {$\overline{8}$};
  \end{scope}
\end{tikzpicture}\quad \quad 
\begin{tikzpicture}[scale=0.45]
  \draw (0,0) rectangle (8,3);
  \foreach \x in {0.5,1.5,...,7.5}
    {\fill (\x,3) circle (2pt);
     \fill (\x,0) circle (2pt);}
  \begin{scope}
    \draw (0.5,3) -- (1.5,0);
    \draw (7.5,3) -- (7.5,0);
    \draw (5.5,3) -- (6.5,0);  \draw (1.5,0) -- (3.5,3);\draw (3.5,3) -- (4.5,0);
    \draw (0.5,3) arc (180:360:0.5 and 0.25);
    \draw (5.5,0) arc (0:180:1 and .7);
    \draw (3.5,0) arc (0:180:.5 and .25);
    \draw (6.5,0) arc (0:180:0.5 and 0.5);
    \draw (4.5,3) arc (180:360:1 and 0.7);
    \draw (5.5,3) arc (180:360:0.5 and 0.25);
         \draw (2.5,-0.49) node {$3$};   \draw (2.5,+3.5) node {$\overline{3}$};
                  \draw (1.5,-0.49) node {$2$};   \draw (1.5,+3.5) node {$\overline{2}$};
                           \draw (0.5,-0.49) node {$1$};   \draw (0.5,+3.5) node {$\overline{1}$};
         \draw (3.5,-0.49) node {$4$};   \draw (3.5,+3.5) node {$\overline{4}$};
                  \draw (4.5,-0.49) node {$5$};   \draw (4.5,+3.5) node {$\overline{5}$};
                           \draw (5.5,-0.49) node {$6$};   \draw (5.5,+3.5) node {$\overline{6}$};
         \draw (6.5,-0.49) node {$7$};   \draw (6.5,+3.5) node {$\overline{7}$};         \draw (7.5,-0.49) node {$8$};   \draw (7.5,+3.5) node {$\overline{8}$};
  \end{scope}
\end{tikzpicture}
  \caption{Two representatives of the set-partition $d$.}
\label{2diag}
\end{figure}
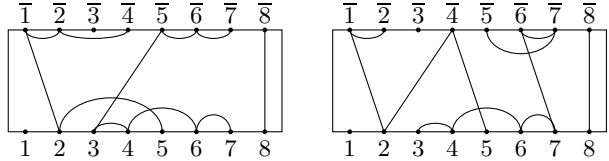

We define the product $x \cdot y$ of two diagrams $x$
and $y$ using the concatenation of $x$ above $y$, where we identify
the southern vertices of $x$ with the northern vertices of $y$.   
If there are $t$ connected components consisting only of  middle vertices, then the product is set equal to $n^t$ times the diagram  
with the middle components removed. Extending this by linearity defines a multiplication on ${P}_k(n)$.

We let $P_{k-1/2}(n)$ denote the subspace of $P_{k}(n)$ spanned by all set-partitions in which 
the integers $k$ and $\bar{k}$ appear in the same block.  This subspace is clearly closed under the multiplication and therefore is a subalgebra.  
In  \cite{EG:2012},   Enyang and Goodman  showed that the sequence of  algebras
   $$
   P_0(n) \subseteq P_{\frac{1}{2}}(n)
    \subseteq P_{1}(n)
    \subseteq P_{\frac{3}{2}}(n)
     \subseteq P_{2}(n) \subseteq \dots 
   $$
    form a tower of diagram algebras 
    obtained `by reflections' from the  (tower of) symmetric groups.   
 The Jucys--Murphy elements for the partition algebras were defined in \cite{HR}; a recursive construction of these elements is given in \cite{JUCY}.
 Therefore these algebras fit into our framework and we have the following.

\begin{prop}
    Let 
$\lambda $ be a partition of degree $r-s$, $\mu$ be  a partition of degree $s$, and 
$\nu$ be   a partition of degree less than or  equal to 
$ r $.   
 We have that
 \begin{align*}
 \overline{g}^\nu_{\lambda,\mu}= 
\Hom_{P^\QQ_s(n)}
(\Delta_s^{\mathbb{Q}}(\mu), \Delta_s^{\mathbb{Q}}(\nu\setminus\lambda)).
  \end{align*}
  for $n$ sufficiently large ($n\geq 2r$ will suffice).  
 \end{prop}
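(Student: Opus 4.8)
The plan is to combine the abstract isomorphism of \cref{fdsakhjasldfhlhsdfsdfhl} (specialised to the partition algebra tower over $\QQ$) with the classical Schur--Weyl duality between the partition algebra $P_s^\QQ(n)$ and the symmetric group $\mathfrak{S}_n$. Concretely, for $n$ sufficiently large, $P_s^\QQ(n)$ is semisimple and its simple modules $\Delta_s^\QQ(\mu)$ are in bijection with partitions $\mu$ of degree at most $s$; moreover, under Schur--Weyl duality the simple $P_s^\QQ(n)$-module $\Delta_s^\QQ(\mu)$ corresponds to the simple $\mathfrak{S}_n$-module $S^\QQ(\mu_{[n]})$, in the sense that $(\QQ^n)^{\otimes s}$ decomposes as $\bigoplus_\mu \Delta_s^\QQ(\mu)\boxtimes S^\QQ(\mu_{[n]})$ as a $(P_s^\QQ(n),\mathfrak{S}_n)$-bimodule. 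The degree-$s$ partition algebra acts on $(\QQ^n)^{\otimes s}$, the symmetric group $\mathfrak{S}_n$ acts diagonally, and these actions generate each other's commutant once $n\ge 2s$.

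First I would apply \cref{fdsakhjasldfhlhsdfsdfhl} with $A_k = P_k^\QQ(n)$ to obtain
\[
\Hom_{P^\QQ_s(n)}(\Delta_s^{\QQ}(\mu), \Delta_s^{\QQ}(\nu\setminus\lambda)) \cong
\Hom_{P^\QQ_{r-s}(n)\times P^\QQ_s(n)}\big(\Delta_{r-s}^{\QQ}(\lambda)\boxtimes \Delta_s^{\QQ}(\mu),\ \Res\, \Delta_r^{\QQ}(\nu)\big),
\]
so it suffices to compute the dimension of the right-hand space. Next I would translate this multiplicity space through Schur--Weyl duality: the restriction functor from $P_r^\QQ(n)$ to $P_{r-s}^\QQ(n)\times P_s^\QQ(n)$ corresponds, on the $\mathfrak{S}_n$-side, to the fact that $(\QQ^n)^{\otimes r} = (\QQ^n)^{\otimes (r-s)} \otimes (\QQ^n)^{\otimes s}$ as a module for $\mathfrak{S}_n$ acting diagonally on all $r$ tensor factors. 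Taking $\mathfrak{S}_n$-isotypic components and using the double-centraliser property, the multiplicity of $\Delta_{r-s}^\QQ(\lambda)\boxtimes\Delta_s^\QQ(\mu)$ inside $\Res\,\Delta_r^\QQ(\nu)$ equals the multiplicity of $S^\QQ(\nu_{[n]})$ inside $S^\QQ(\lambda_{[n]})\otimes S^\QQ(\mu_{[n]})$, which by definition is $g^{\nu_{[n]}}_{\lambda_{[n]},\mu_{[n]}}$.

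Finally, I would invoke the stabilisation property quoted in the section: once $n$ is large enough (the bound $n\ge 2r$ guarantees both semisimplicity of $P_r^\QQ(n)$ and that all the partitions $\lambda_{[n]},\mu_{[n]},\nu_{[n]}$ are genuine partitions with first part at least as large as the rest, so the Kronecker coefficient has already reached its stable value), we have $g^{\nu_{[n]}}_{\lambda_{[n]},\mu_{[n]}} = \overline{g}^{\nu}_{\lambda,\mu}$. Chaining the three identifications gives the claimed equality. \textbf{The main obstacle} I anticipate is making the middle step fully rigorous: one must check carefully that the Schur--Weyl correspondence is compatible with the tower embeddings $P_{r-s}^\QQ(n)\times P_s^\QQ(n)\subseteq P_r^\QQ(n)$ used to build the skew cell module — i.e. that the embedding of partition algebras realises precisely the splitting of tensor factors — and that the dimension count survives the passage between the Murphy-basis description of $\Delta_s^\QQ(\nu\setminus\lambda)$ (as in \cref{corollary for skew def}) and its intrinsic characterisation as a $\Hom$-space. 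Both points are standard, but the bookkeeping linking the combinatorial construction to the bimodule statement is where care is needed; everything else is a routine application of semisimplicity and the double-centraliser theorem.
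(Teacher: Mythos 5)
Your proposal is correct and follows essentially the same route as the paper: the paper's proof is simply to combine \cref{fdsakhjasldfhlhsdfsdfhl} with the citation of Corollary 3.4 of Bowman--De Visscher--Orellana, and that cited corollary is precisely the Schur--Weyl/double-centraliser computation (identifying the multiplicity of $\Delta_{r-s}^\QQ(\lambda)\boxtimes\Delta_s^\QQ(\mu)$ in $\Res\,\Delta_r^\QQ(\nu)$ with $g^{\nu_{[n]}}_{\lambda_{[n]},\mu_{[n]}}$ for $n\ge 2r$, already stabilised) that you carry out by hand. The only difference is that you unpack the content of the reference rather than citing it.
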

\begin{proof}
This follows immediately from \cref{fdsakhjasldfhlhsdfsdfhl} and \cite[Corollary 3.4]{BDO15}.
\end{proof}

\end{document}